\newcommand{\R}{\mathbf{R}}
\newcommand{\C}{\mathbf{C}}
\newcommand{\Q}{\mathbf{Q}}
\newcommand{\Z}{\mathbf{Z}}
\newcommand{\A}{\mathbf{A}}
\newcommand{\F}{\mathbf{F}}
\newcommand{\K}{\mathbf{K}}
\renewcommand{\P}{\mathbf{P}}
\newcommand{\m}{\mathfrak{m}}
\newcommand{\G}{\mathbb G}
\newcommand{\HH}{\mathbb H}
\newcommand{\CS}{\mathbb S}
\DeclareMathOperator{\Pic}{Pic}
\DeclareMathOperator{\Div}{Div}
\DeclareMathOperator{\ord}{ord}
\newcommand{\OO}{\mathcal O}
\DeclareMathOperator{\id}{id}
\DeclareMathOperator{\Tr}{Tr}
\DeclareMathOperator{\supp}{Supp}
\DeclareMathOperator{\NS}{NS}
\DeclareMathOperator{\Ind}{Ind}
\DeclareMathOperator{\spec}{Spec}
\DeclareMathOperator{\Aut}{Aut}
\DeclareMathOperator{\GL}{GL}
\newcommand{\DivInf}{\Div_\infty}
\DeclareMathOperator{\car}{char}
\DeclareMathOperator{\tr.deg}{tr.deg}
\DeclareMathOperator{\Weil}{Weil}
\DeclareMathOperator{\Cartier}{Cartier}
\DeclareMathOperator{\Cinf}{\Cartier_\infty (X_0)}
\DeclareMathOperator{\Winf}{\Weil_\infty (X_0)}
\DeclareMathOperator{\Bir}{Bir}
\DeclareMathOperator{\Stab}{Stab}
\newcommand{\Torus}{\G_m^2}
\newtheorem{thm}{Theorem}[section]
\newtheorem{thm*}{Theorem}
\newtheorem{prop}[thm]{Proposition}
\newtheorem{cor}[thm]{Corollary}
\newtheorem{lemme}[thm]{Lemma}
\newtheorem{conj}[thm]{Conjecture}
\theoremstyle{definition}
\newtheorem{ex}[thm]{Example}
\newtheorem{rmq}[thm]{Remark}
  \setlist[enumerate]{label={(\roman*)}}
  \setlist[enumerate,1]{label={(\arabic*)}}
  \setlist[enumerate,1]{label={(\arabic*)}}
  \setlist[enumerate,1]{label={(\arabic*)}}
\begin{document}
\title{Intersection of orbits of loxodromic automorphisms of affine surfaces}
\author{Marc Abboud}
\address{Marc Abboud, Institut de mathématiques, Université de Neuchâtel
\\ Rue Emile-Argand 11 CH-2000 Neuchâtel}
\email{marc.abboud@normalesup.org}
\subjclass[2020]{37F10, 37F80, 37P05, 37P30, 32H50, 37P55, 11G50}
\keywords{Heights, affine surface, loxodromic automorphisms, dynamical Mordell-Lang conjecture}
\thanks{The author acknowledge support by the Swiss National Science Foundation Grant “Birational transformations of higher dimensional varieties” 200020-214999.}
\maketitle

\begin{abstract}
We show the following result: If $X_0$ is an affine surface over a field $K$ and $f,g$ are two loxodromic
automorphisms with an orbit meeting infinitely many times, then $f$ and $g$ must share a common iterate. The proof uses
the preliminary work of the author in \cite{abboudDynamicsEndomorphismsAffine2023} on the dynamics of endomorphisms of
affine surfaces and arguments from arithmetic dynamics. We then show a dynamical Mordell-Lang type result for surfaces
in $X_0 \times X_0$.
\end{abstract}

\section{Introduction}\label{sec:intro}

\subsection{Loxodromic birational maps}\label{subsec:loxodromic-bir-maps}
Let $K$ be a field, $X$ be a projective surface over $K$. We write $\Bir (X)$ for the group of birational map of $X$, if $X$ is rational then $\Bir (X) = \Bir (\P^2_K)$ is the
Cremona group. Let $f$ is a birational map over $X$, the \emph{dynamical degree}
of $f$ is the number defined as
\begin{equation}
  \lambda (f) = \lim_n \left( (f^n)^* H \cdot H \right)^{1/n}
  \label{eq:<+label+>}
\end{equation}
where $H$ is an ample divisor on $X$. It is a well defined number and it does not depend on the choice of the ample
divisor $H$. Furthermore, if $\phi : X \dashrightarrow Y$ is a birational map, then
\begin{equation}
  \lambda (f) = \lambda (\phi \circ f \circ \phi^{-1}).
  \label{eq:<+label+>}
\end{equation}
Hence, the dynamical degree is a birational invariant.
We have $\lambda (f) \geq 1$ and we say that $f$ is \emph{loxodromic} if its
dynamical degree $\lambda (f)$ is $> 1$.

\subsection{Loxodromic automorphisms of affine surfaces}\label{subsec:loxodromic-auto}

Let $X_0$ be a normal affine surface over a field $K$. A \emph{completion} of $X_0$ is a normal projective surface $X$
with an open embedding $X_0 \hookrightarrow X$ such that $X \setminus X_0$ admits a smooth open neighbourhood. For any
completion $X$, we have a natural inclusion $\Aut (X_0) \subset \Bir (X)$. The
dynamical degree of $f$ is defined as the dynamical degree of the birational map $f : X \dashrightarrow X$. It does not
depend on the completion $X$ by the birational invariance. An automorphism is loxodromic if the induced birational map
is.

Over the affine plane $\A^2_K$, a polynomial automorphism is loxodromic if and
only if it is conjugated to a Henon automorphism. An affine surface that admits a loxodromic automorphism is always
rational (see Proposition \ref{prop:charc-alg-torus}) therefore we have a natural inclusion $\Aut (X_0) \subset \Bir
(\P^2)$ whenever $\Aut (X_0)$ admits a loxodromic element.

We give an important example of a subgroup of $\Aut(\A^2_K)$ when $K$ is of positive characteristic $p$. Let $A$ be the $K$
algebra of polynomials in the Frobenius map $z \mapsto z^p$. It is a non commutative algebra and we write $\GL_2
(A)$ for the set of $2 \times 2$ matrices with coefficients in $A$ that are invertible over $A$. It acts on $\A^2_K$ via
the following action
\begin{equation}
  \begin{pmatrix} a & b \\ c & d\end{pmatrix} \cdot (x,y) = \left( a(x) + b (y), c(x) + d(y) \right)
  \label{eq:<+label+>}
\end{equation}
and can be seen as a subgroup of $\Aut (\A^2_K) \subset \Bir (\P^2_K)$.
The group $\GL_2 (A)$ is the normaliser of the additive group $\G_a (K) \times \G_a (K)$ (acting on the plane by
translations) inside $\Bir (\P^2_K)$ the group of birational transformation of the projective plane. We write $\Aut_F
(\A^2_K) = \GL_2 (A) \ltimes (\G_a (K) \times \G_a (K))$, the letter $F$ stands for Frobenius.

\subsection{Loxodromic automorphisms with orbits meeting infinitely many times}\label{subsec:loxo-auto-with-common-orbit}
If $X$ is a quasiprojective variety over a field $K$, $f : X \rightarrow X$ is a dominant endomorphism and $p \in
X(K)$, we write $\OO_f (p)$ for the orbit of $p$ under the action of $f$, that is
\begin{equation}
  \OO_f (p) := \left\{ f^n (p) : n \in \A \right\}
  \label{eq:<+label+>}
\end{equation}
where $\A = \Z$ if $f$ is an automorphism and $\A = \Z_{\geq 0}$ otherwise.

In \cite{ghiocaLinearRelationsPolynomial2012}, Ghioca, Tucker and Zieve showed that if $f,g : \C \rightarrow \C$ are two
nonlinear polynomial maps such that there exists $p,q \in \C$ with $\OO_f (p) \cap \OO_g (q)$ infinite, then $f$ and $g$
must share a common iterate. It is natural to study this dynamical problem in higher dimension. In dimension 2, the
dynamics of birational maps has been extensively studied (see \cite{gizatullinRationalGsurfaces2007},
\cite{dillerDynamicsBimeromorphicMaps2001}, \cite{cantatDynamiqueAutomorphismesSurfaces2001}, ...), so it is natural to
ask what the analogue of the result of Ghioca, Tucker and Zieve could be. The analogue in dimension 2 of nonlinear
polynomial maps is loxodromic birational maps of $\P^2$. We focus on the subclass of loxodromic automorphisms of normal
affine surfaces. In \cite{abboudDynamicsEndomorphismsAffine2023}, the author has
classified the dynamics of loxodromic automorphisms of normal affine surface using valuative techniques. Using these
results we manage to answer the problem of orbits meeting infinitely many times.

\begin{thm}\label{thm:relation-orbits}
  Let $X_0$ be a normal affine surface over a field $K$ of characteristic zero and $f,g$ be two loxodromic
  automorphisms. If there exists $p,q \in X_0 (K)$ such that $\OO_f (p) \cap \OO_g (q)$ is infinite, then there exists
  $N,M \in \Z \setminus \left\{ 0 \right\}$ such that
\begin{equation}
  f^N = g^M.
  \label{eq:<+label+>}
\end{equation}
\end{thm}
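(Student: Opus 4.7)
The plan is to combine Kawaguchi canonical heights for loxodromic surface automorphisms with a dynamical Mordell--Lang type argument for the product $F = (f, g)$ acting on $X_0 \times X_0$. After specializing to the case where $K$ is a number field, each loxodromic automorphism $f$ carries Kawaguchi canonical heights $\hat h_f^\pm : X_0(\overline K) \to \R_{\geq 0}$ satisfying $\hat h_f^\pm \circ f = \lambda(f)^{\pm 1} \hat h_f^\pm$, whose sum $\hat h_f$ differs from an ample Weil height on a normal projective completion by $O(1)$. Since the orbits $\OO_f(p)$ and $\OO_g(q)$ are infinite, neither $p$ nor $q$ is preperiodic; after passing to a subsequence of pairs $(n_i, m_i)$ with $f^{n_i}(p) = g^{m_i}(q)$, I may assume $n_i, m_i \to +\infty$ with $\hat h_f^+(p), \hat h_g^+(q) > 0$. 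The functional equations then yield
\begin{equation*}
\lambda(f)^{n_i}\, \hat h_f^+(p) = \lambda(g)^{m_i}\, \hat h_g^+(q) + O(1),
\end{equation*}
so $n_i \log \lambda(f) - m_i \log \lambda(g) = O(1)$; the infinitude of pairs forces $\log \lambda(f)$ and $\log \lambda(g)$ to be commensurable, hence $\lambda(f)^N = \lambda(g)^M$ for some positive integers $N, M$.

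Replacing $f, g$ by $f^N, g^M$, I assume $\lambda(f) = \lambda(g) = \lambda$. The same identity then gives $\lambda^{n_i - m_i} \to \hat h_g^+(q)/\hat h_f^+(p)$, and since the left side lies in a discrete subset of $\R_{>0}$, the integer $n_i - m_i$ is eventually equal to a fixed $r \in \Z$. Replacing $q$ by $g^{-r}(q)$, the problem reduces to: if $f^n(p) = g^n(q)$ for infinitely many $n \geq 0$, then $f^d = g^d$ for some $d \geq 1$. Equivalently, the orbit of $(p, q)$ under $F = (f, g) \in \Aut(X_0 \times X_0)$ meets the diagonal $\Delta \subset X_0 \times X_0$ infinitely often.

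I would then invoke the dynamical Mordell--Lang type theorem for surfaces in $X_0 \times X_0$ announced in the abstract: the return set $\{n \geq 0 : F^n(p, q) \in \Delta\}$ is a union of a finite set and finitely many arithmetic progressions, so it contains an infinite progression $\{n_0 + kd : k \geq 0\}$. Setting $x_k := f^{n_0 + kd}(p)$, the condition $F^{n_0 + kd}(p, q) \in \Delta$ forces $f^d(x_k) = g^d(x_k)$ for every $k$, so the $f^d$-orbit of $x_0$ lies in the closed subvariety $E_d := \{x \in X_0 : f^d(x) = g^d(x)\}$. Since $f^d$ is loxodromic and $x_0$ is not preperiodic, this orbit is Zariski dense in $X_0$ (which is rational by Proposition \ref{prop:charc-alg-torus}), forcing $E_d = X_0$ and hence $f^d = g^d$; undoing the earlier iterations gives $f^{Nd} = g^{Md}$.

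The technical heart of the argument is the Mordell--Lang input on $X_0 \times X_0$: the product is four-dimensional, and the automorphisms are only rational at infinity on any completion, so the standard $p$-adic Bell--Ghioca--Tucker interpolation does not apply directly. I expect this step to rely on the valuative description of the dynamics at infinity of loxodromic surface automorphisms from \cite{abboudDynamicsEndomorphismsAffine2023}, combined with the construction of an $F$-admissible completion of $X_0 \times X_0$ on which a non-archimedean interpolation can be performed near the boundary.
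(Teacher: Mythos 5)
Your approach is genuinely different from the paper's for the case $X_0 \not\simeq \G_m^2$ (the paper shows that some place $v$ sees both forward orbits as unbounded, deduces that $f$ and $g$ share the eigenvaluation $v_-$, and then applies Urech's classification of $\Stab(\theta^+)$ to conclude $f^N = g^M$; no Mordell--Lang input is needed there). Your plan is closer in spirit to what the paper does only in the torus case. As written, though, it has two real gaps and one false alarm.

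First, ``specializing to the case where $K$ is a number field'' is not a reduction you can take for free. You must preserve that $f,g$ remain loxodromic, that $p,q$ remain non-preperiodic, and (crucially) that the intersection of orbits remains infinite after specialization. None of this is automatic, and the paper expends an entire subsection on the case $\tr.deg K/\Q \geq 1$, using Moriwaki heights and a careful choice of arithmetic polarization so that the archimedean measure $\mu_\C$ avoids the bad locus of the family. You need either that argument or an actual specialization lemma.

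Second, the existence of Kawaguchi canonical heights $\hat h_f^\pm$ with the functional equation $\hat h_f^\pm \circ f = \lambda(f)^{\pm 1}\hat h_f^\pm$ and with $\hat h_f^+ + \hat h_f^- = h + O(1)$ is not a black box you can cite for a loxodromic automorphism of an arbitrary normal affine surface. Kawaguchi's constructions cover automorphisms of projective surfaces and regular polynomial automorphisms of $\A^2$; here $f$ is only a \emph{birational} self-map of every completion, and it is exactly to avoid this issue that the paper works with eigenvaluations and the local attracting structure at $p_\pm$ from Theorem \ref{thm:dynamics-loxodromic-automorphisms} instead of canonical heights. If you want to use $\hat h_f^\pm$, you must first build them in this generality (e.g.\ via the nef classes $\theta^\pm$ in the Picard--Manin space and an adelic metrization argument), which is a nontrivial piece of work not contained in the cited references.

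Finally, the worry in your last paragraph is unfounded: $(f,g)$ is an automorphism of the quasiprojective (indeed affine) variety $X_0 \times X_0$, hence \'etale, so the Bell--Ghioca--Tucker theorem \cite{bellDynamicalMordellLangProblem2010} applies directly to the return set to the diagonal; the paper invokes it this way in the proof of Theorem \ref{thm:dyn-mordell-lang-like}. Do not appeal to Theorem \ref{thm:dyn-mordell-lang-like-graph}, which is proved using Theorem \ref{thm:relation-orbits} and would be circular. Also note that the final Zariski-density step needs the ``no invariant curve'' statement for loxodromic automorphisms of general normal affine surfaces (Proposition 4.19 of \cite{abboudDynamicsEndomorphismsAffine2023}), not only the torus case.
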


We actually prove this theorem in any characteristic but the statement is a bit more technical (see Theorem
\ref{thm:relation-orbits-any-char}). Indeed, if $K$ is of positive characteristic, there is an extra case we have to
deal with. The normalizer in $\Aut (\A^2_K)$ of
the additive group $\G_a(K) \times \G_a (K)$ acting by translation on the affine plane is a subgroup for which we manage
to show Theorem \ref{thm:relation-orbits} only with an extra condition on the density of the set $\OO_f (p) \cap \OO_g
(q)$. See \S \ref{sec:aut-F-A2}.

\subsection{Strategy of proof}
It suffices to prove the theorem when $K$ is finitely generated over its prime field. Indeed, $K$
contains such a subfield $K_0$ over which $f,g,X_0,p,q$ are defined. Furthermore, if $K$ is a finite field then the
theorem is void so if $\car K > 0$ we will assume that $K$ has transcendence degree $\geq 1$ over its prime field.
Let us note the following characterisation of the algebraic torus, which was
proven in \cite{abboudDynamicsEndomorphismsAffine2023} \S 10.
\begin{prop}\label{prop:charc-alg-torus}
  If $X_0$ is a normal affine surface with a loxodromic automorphism, then $X_0$ is rational and we have the following
  dichotomy
  \begin{enumerate}
    \item $X_0 \simeq \G_m^2$.
    \item $K [X_0]^\times = K^\times$.
  \end{enumerate}
\end{prop}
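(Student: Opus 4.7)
The plan is to establish rationality first, and then the dichotomy on units.

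\emph{Rationality.} I would extend $f$ to a loxodromic birational self-map of a smooth projective completion $X \supset X_0$. By Cantat's classification of smooth projective surfaces admitting a loxodromic birational self-map, such an $X$ is birational to $\P^2$, to an abelian surface, a K3 surface, or an Enriques surface. To rule out the last three I pass to the logarithmic Kodaira dimension of $X_0$: writing $D = X \setminus X_0$, regularity of $f$ on $X_0$ makes $f$ permute the components of $D$, so the pair $(X,D)$ is almost $f$-invariant, and any $f^*$-invariant multiple of $K_X + D$ would produce a preperiodic pseudo-effective class, which a loxodromic map does not admit. Hence $\overline{\kappa}(X_0) = -\infty$, and combined with the absence of an invariant fibration (see below) this forces rationality.

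\emph{The dichotomy on units.} Assume $K[X_0]^\times \neq K^\times$ and set $U = K[X_0]^\times / K^\times$, a finitely generated free abelian group of rank $r \geq 1$ on which $f^*$ acts $\Z$-linearly. If $r = 1$, a generator $u$ must satisfy $f^* u = \lambda u^{\pm 1}$, so after replacing $f$ by $f^2$ the morphism $u : X_0 \to \G_m$ is preserved by $f$. But a loxodromic automorphism can preserve no fibration by curves: the induced action on the base has zero entropy, as does the action on the generic fibre, contradicting $\lambda(f) > 1$. Thus $r \geq 2$, while algebraic independence of the $u_i$ inside $K(X_0)$ (of transcendence degree $\dim X_0 = 2$) forces $r \leq 2$, so $r = 2$.

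\emph{Identifying the torus.} Choosing a basis $u_1, u_2$ of $U$ yields a dominant, hence finite and surjective, morphism $\Phi = (u_1,u_2) : X_0 \to \G_m^2$ between normal affine surfaces. This $\Phi$ is $f$-equivariant up to an automorphism $\bar f \in \Aut(\G_m^2) = \GL_2(\Z) \ltimes \G_m^2$, and loxodromy of $f$ forces the $\GL_2(\Z)$-part of $\bar f$ to be hyperbolic. The main obstacle is then to show $\deg \Phi = 1$: I would pass to the Galois closure $\widetilde{X_0} \to \G_m^2$ with finite deck group $\Gamma$ normalised by a lift of $f$, and use that the hyperbolicity of the $\GL_2(\Z)$-part of $\bar f$ prevents any non-trivial equivariant branch data on $\G_m^2$, together with a careful analysis of the ramification locus and its interaction with the boundary $D$, to conclude that $\Gamma$ is trivial. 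This yields $X_0 \simeq \G_m^2$; the full argument is carried out in \cite{abboudDynamicsEndomorphismsAffine2023} §10.
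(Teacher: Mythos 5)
The paper does not prove this proposition; it simply cites \S 10 of \cite{abboudDynamicsEndomorphismsAffine2023}, so there is no in-text argument to compare against, and your proposal has to stand on its own.

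The step that fails is "$r \geq 2$, while algebraic independence of the $u_i$ inside $K(X_0)$ (of transcendence degree $2$) forces $r \leq 2$." A basis $u_1, \dots, u_r$ of $U = K[X_0]^\times/K^\times$ is \emph{multiplicatively} independent, but multiplicative independence does not imply algebraic independence, so transcendence degree gives no bound on $r$. Concretely, on the normal affine surface $\left\{ x + y + z = 0 \right\} \subset \G_m^3$, the three coordinate functions $x,y,z$ are multiplicatively independent units yet algebraically dependent, so $r \geq 3$ while $\tr.deg = 2$. (That surface admits no loxodromic automorphism, which is why it is not a counterexample to the proposition, but it shows the bound $r \leq 2$ cannot be a formal consequence of dimension: the loxodromic hypothesis must enter, and your argument never uses $f$ at this step.) Without this bound, the construction of the finite map $\Phi : X_0 \to \G_m^2$ does not get off the ground.

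Two smaller issues. In the rationality paragraph, "a loxodromic map does not admit a preperiodic pseudo-effective class" is false as stated (a loxodromic automorphism of a K3 surface fixes $K_X = 0$, and $D$ itself gives an $f^*$-periodic effective class). The correct mechanism for an affine $X_0$ is Goodman's theorem: there is an ample effective divisor supported on $D = X\setminus X_0$, and $f^*$ permutes the components of $D$, so the $f^*$-orbit of that ample class stays in a bounded set of $\NS(X)_\R$, forcing $\lambda(f) = 1$ unless $\Bir(X) \neq \Aut(X)$, i.e. $X$ is rational. Finally, the last paragraph ("pass to the Galois closure $\dots$ careful analysis of the ramification locus") is a roadmap rather than a proof, and you yourself fall back on citing \cite{abboudDynamicsEndomorphismsAffine2023} \S 10 for the actual argument; that step, like the $r \leq 2$ step, would need to be filled in.
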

Therefore a normal affine surface $X_0$ admitting a loxodromic automorphism must be rational, and we have a natural
embedding $\Aut (X_0) \hookrightarrow \Bir (\P^2)$ the group of birational transformations of $\P^2$.

In \cite{abboudDynamicsEndomorphismsAffine2023}, the author showed
that, when $X_0 \neq \G_m^2$ and $f \in \Aut (X_0)$ is a loxodromic automorphisms there are exactly two valuations $v_+,
v_-$ on the ring of regular functions of $X_0$ that are fixed by $f$. Using these two valuations, we can construct good
completions $X$ such that $f$ and $f^{-1}$ admits a locally attracting fixed point $p_+(X), p_-(X)$ at infinity that are
related to $v_+$ and $v_-$. From \cite{cantatGroupesTransformationsBirationnelles2011}, we have that a loxodromic
automorphism of $X_0$ induces a hyperbolic isometry of some infinite dimensional hyperbolic space with two fixed point
$\theta^+, \theta^-$ on the boundary. These two fixed points correspond exactly to the valuations $v_+$ and $v_-$.

With this preliminary work, the proof works as follows: If $\OO_f (p) \cap \OO_g (q)$ is infinite, then $p$ is not a
periodic point of $f$ and $q$ is not a periodic point of $g$. By an argument from arithmetic dynamics using Weil height
and the Northcott property we show that there
must exist an absolute value over $K$ such that both $\OO_f (p)$ and $\OO_g (q)$ are unbounded and we must have $f^n (p)
\rightarrow p_+^f (X)$ and $g^m (q) \rightarrow p^+_g (X)$. This implies that $p_+^f (X) = p_+^g (X)$ for any good
completion $X$. This means that $v_+ (f) = v_+ (g)$ and therefore $\theta^+ (f) = \theta^+ (g)$ and if $\langle f,g
\rangle$ is not conjugated to a subgroup of $\Aut_F (\A^2_K)$ this can only occur when $f^N = g^M$ by the work of Urech in
\cite{urechSubgroupsEllipticElements2021}.

For $\Aut (\G_m^2)$, the start of the proof is the same. We show that there must exists an absolute
value such that the orbits are unbounded. We show that $f^n (p)$ and $g^n (p)$ converge to the same point at infinity on
some completion $X$ at a speed $\simeq \lambda(f)^n$ (resp. $\simeq \lambda (g^n)$). This will imply that we must have $\lambda
(f)^a = \lambda (g)^b$ for some positive integers $a,b$ and the existence of an integer $c$ such that $f^{an}(p) =
g^{bn + c} (q)$ for infinitely many $n$. The conclusion will follow by applying known results on the dynamical
Mordell-Lang conjecture (see the next paragraph).

\subsection{A dynamical Mordell-Lang style result}\label{subsec:second-result}
The dynamical Mordell-Lang conjecture states the following
\begin{conj}\label{conj:mordell-lang}
  let $X$ be a quasiprojective variety over a field $K$ of characteristic zero and let $f : X \rightarrow X$ be an
  endomorphism of $X$. Let $V \subset X$ be a closed subvariety and $p \in X(K)$, then the set
  \begin{equation}
    \left\{ n \in \Z_{\geq 0} : f^n (x) \in V \right\}
    \label{eq:<+label+>}
  \end{equation}
  is the union of a finite set and a finite number of arithmetic progressions, i.e sets of the form $\left\{ a n + b : n
  \geq 0 \right\}$ with $a,b \in \Z_{\geq 0}$. In particular, the closure of $\OO_{f^a} (f^b (x))$ is $f^a$-invariant.
\end{conj}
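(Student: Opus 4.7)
The statement is an open conjecture, so any honest proposal must replicate the structure of the known partial results and flag what is missing. My plan is the $\mathfrak{p}$-adic arc method initiated by Skolem, Mahler, and Lech and extended to dynamics by Bell and by Bell--Ghioca--Tucker. First I would spread $X$, $V$, $f$, and $p$ out over a finitely generated $\Z$-subalgebra $R \subset K$ and choose a maximal ideal $\mathfrak{p} \subset R$ of good reduction, so $X_\mathfrak{p}$, $V_\mathfrak{p}$, $f_\mathfrak{p}$ are defined over a complete local ring $\OO_\mathfrak{p}$ with finite residue field $k$. Replacing $f$ with an iterate $f^a$ and $p$ with $f^b(p)$ so that the residue $\bar p$ lies in a $\bar f$-periodic residue class of $X_\mathfrak{p}(k)$ (pigeonhole on the finite set $X_\mathfrak{p}(k)$) and then passing to the period, I arrange that $\bar p$ is fixed by $\bar f$ and the whole orbit lies in the residue disk $D$ around $\bar p$, identified with a formal polydisk via local coordinates.

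The crucial step is to construct a $\mathfrak{p}$-adic analytic map $\Phi : \Z_p \to D$ with $\Phi(n) = f^n(p)$ for all $n \geq 0$. After a further iterate one can arrange that the linear part of $f$ at $\bar p$ is congruent to the identity modulo a large power of $\mathfrak{p}$; then $\log f$ exists as a convergent formal vector field on $D$ and the one-parameter flow $\exp(t \log f)$, evaluated at $t = n$, interpolates the orbit. This step works cleanly only when $f$ is \'etale at $\bar p$: invertibility of $df_{\bar p}$ is essential both for the convergence of $\log f$ and for the injectivity of $f$ on a neighbourhood of $\bar p$ used in the pigeonhole reduction.

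Once $\Phi$ has been produced, I would pick local equations $g_1, \ldots, g_r$ cutting out $V$ on a neighbourhood of $p$ and form the $\mathfrak{p}$-adic analytic functions $h_i := g_i \circ \Phi$ on $\Z_p$. The set $\{ n \in \Z_{\geq 0} : f^n(p) \in V \}$ then coincides with the common zero set of the $h_i$ intersected with $\N$. By Strassmann's theorem each nonzero $h_i$ has only finitely many zeros on $\Z_p$; partitioning $\{1, \ldots, r\}$ into the indices for which $h_i \equiv 0$ versus the rest, and intersecting the common zero locus of the first stratum (an arithmetic progression in $\Z_p$, hence a union of arithmetic progressions in $\N$) with the finite zero set of the second, yields the decomposition of the return set as a finite set together with a finite union of arithmetic progressions.

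The hard part, and the precise reason the general conjecture remains open, is the construction of $\Phi$ in the ramified case: when $df_{\bar p}$ is not invertible, no systematic production of a $\mathfrak{p}$-adic analytic parametrisation of the orbit is known. To push past this I would try a geometric or Berkovich-analytic refinement (e.g. a formal resolution of the graph of $f$ along the orbit, or restriction to the formal stable manifold of $\bar p$) or combine the analytic step with an arithmetic input of the kind used in the partial results known for polynomial endomorphisms of affine spaces and for split endomorphisms of products; everything else in the argument above is formal once the analytic arc $\Phi$ has been produced.
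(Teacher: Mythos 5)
You are right that this statement is an open conjecture: the paper does not prove it, it only states it and cites Bell--Ghioca--Tucker \cite{bellDynamicalMordellLangProblem2010} for the \'etale case, which is precisely the $p$-adic interpolation (Skolem--Mahler--Lech) argument you sketch. Your outline matches that known proof for the \'etale case and correctly identifies the non-invertibility of $df$ at the reduced point as the obstruction in general, so there is nothing to compare against in the paper beyond the citation; your honest flagging of the ramified case as the open part is exactly the current state of the art.
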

It has been proven in \cite{bellDynamicalMordellLangProblem2010} for étale endomorphisms of quasiprojective varieties.
We prove two other results which are in the vein of the Dynamical Mordell-Lang conjecture. They are analogues of Theorem
1.4 and 1.5 of \cite{ghiocaLinearRelationsPolynomial2012}.
\begin{thm}\label{thm:dyn-mordell-lang-like}
  Let $X_0$ be a normal affine surface over a field $K$ of characteristic zero and $V \subset X_0 \times X_0$ a closed
  irreducible subvariety of dimension 2. Suppose there exists $(x_0, y_0) \in X_0 (K) \times X_0 (K)$ such that
  \begin{equation}
    \OO_{(f,g)} (x_0, y_0) \cap V
    \label{eq:<+label+>}
  \end{equation}
  is infinite, then $V$ is $(f,g)$-periodic.
\end{thm}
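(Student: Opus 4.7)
The overall plan is to combine the dynamical Mordell--Lang theorem of Bell, Ghioca and Tucker for étale endomorphisms (which proves Conjecture \ref{conj:mordell-lang} for such maps in characteristic zero) with Theorem \ref{thm:relation-orbits}. The product map $(f,g) \colon X_0 \times X_0 \to X_0 \times X_0$ is an automorphism, hence étale, so applying the Bell--Ghioca--Tucker theorem to $(f,g)$ with subvariety $V$ and initial point $(x_0, y_0)$ gives that
\[
\{n \in \Z_{\geq 0} : (f^n(x_0), g^n(y_0)) \in V\}
\]
is a union of a finite set and finitely many arithmetic progressions. By hypothesis this set is infinite, so it contains an arithmetic progression $\{an + b : n \geq 0\}$ with $a \geq 1$. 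Since $V$ is $(f,g)$-periodic if and only if it is $(f^a, g^a)$-periodic, I may replace $f, g$ by $f^a, g^a$ and $(x_0, y_0)$ by $(f^b(x_0), g^b(y_0))$, after which the entire forward orbit $\{(f^n(x_0), g^n(y_0)) : n \geq 0\}$ lies in $V$.

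Let $W$ denote the Zariski closure of this forward orbit. Then $W \subset V$ and $W$ is $(f,g)$-invariant; moreover $\dim W \geq 1$ since the orbit is infinite. The key step is to prove $\dim W = 2$: for then $W$ is a closed $2$-dimensional subvariety of the irreducible surface $V$, hence $W = V$, so that $V$ itself is $(f,g)$-invariant and in particular $(f,g)$-periodic.

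The main obstacle is therefore the case $\dim W = 1$, in which the forward orbit accumulates on a proper $(f,g)$-invariant irreducible curve $W \subsetneq V$. Here I analyse $W$ through its two projections $\pi_i(W) \subset X_0$, which are $f$- and $g$-invariant respectively. When both projections are curves $C_1, C_2$, the orbits $\OO_f(x_0)$ and $\OO_g(y_0)$ are Zariski dense in them, and $W$ realises a correspondence between $C_1$ and $C_2$ that intertwines the restrictions $f|_{C_1}$ and $g|_{C_2}$; together with the classification of invariant curves of loxodromic automorphisms from \cite{abboudDynamicsEndomorphismsAffine2023} and Theorem \ref{thm:relation-orbits}, this forces $f$ and $g$ to share an iterate and constrains the structure of $V$ enough to show it must be $(f,g)$-periodic. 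When one projection of $W$ collapses to a point, one of the coordinates $x_0, y_0$ is fixed by the corresponding automorphism; a slice analysis of $V$ along the fibre over that fixed point, combined with a further application of the Bell--Ghioca--Tucker theorem to the induced orbit on the slice, pins down the structure of $V$ sufficiently to establish its $(f,g)$-periodicity.
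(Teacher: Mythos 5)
Your setup is the same as the paper's: apply the Bell--Ghioca--Tucker theorem to the \'etale map $(f,g)$, pass to an arithmetic progression, replace $f,g$ by $f^a,g^a$, and study the Zariski closure $W$ of the resulting forward orbit inside $V$, reducing everything to showing $\dim W = 2$. Up to that point the argument is correct (modulo the small reduction, handled in the paper via Lemma \ref{lemme:reductions}-type reasoning, from the two-sided orbit to a forward orbit). The problem is that the crucial case $\dim W = 1$ is not actually proved; it is only sketched, and the sketch goes in the wrong direction. The fact that resolves this case is stated in the paper and quoted in its introduction: a loxodromic automorphism of a normal affine surface admits \emph{no} invariant curve at all (Corollary \ref{cor:no-invariant-curves-algebraic-torus} for $\G_m^2$ and Proposition 4.19 of \cite{abboudDynamicsEndomorphismsAffine2023} otherwise). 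If $\dim W = 1$, then at least one of the two projections $\overline{\pi_1(W)}$, $\overline{\pi_2(W)}$ is a curve (if both were points $W$ would be finite), and that curve is invariant under the loxodromic automorphism $f^a$ or $g^a$ --- an immediate contradiction. So your case ``both projections are curves $C_1, C_2$'' simply cannot occur, and there is no ``classification of invariant curves'' to invoke; likewise, in your case where one projection collapses to a point, the other projection is forced to be an invariant curve and the contradiction is instantaneous, with no need for a slice analysis or a second application of dynamical Mordell--Lang.

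As written, your treatment of the one-dimensional case (``$W$ realises a correspondence \dots this forces $f$ and $g$ to share an iterate and constrains the structure of $V$ enough'') is not an argument: no mechanism is given for extracting a common iterate from the correspondence, Theorem \ref{thm:relation-orbits} is not applicable here (you only have density of orbits in the curves $C_i$, not an infinite intersection of an $f$-orbit with a $g$-orbit in $X_0$), and even granting a common iterate you do not explain how periodicity of $V$ would follow. The same goes for the unspecified ``slice analysis.'' So the skeleton matches the paper, but the key lemma that closes the argument is missing, and what you propose in its place would not succeed as stated. Note also that once $\dim W = 2$ is forced, the paper's remaining bookkeeping (the possibility $\dim \overline{\pi_1(W)} = 0$, $\dim \overline{\pi_2(W)} = 2$, where $V$ is a fibre $\{pt\}\times X_0$ over a periodic point) is worth spelling out, since in that case $W$ need not be irreducible and one concludes periodicity of $V$ directly rather than via $W = V$.
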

The proof of this results uses the dynamical Mordell-Lang conjecture for étale endomorphisms from
\cite{bellDynamicalMordellLangProblem2010} and the fact that a loxodromic automorphisms of an affine surface does not
admit invariant curves. Using Theorem \ref{thm:relation-orbits}, we also prove a slightly stronger result with more
assumption on $V$.

\begin{thm}\label{thm:dyn-mordell-lang-like-graph}
  Let $X_0$ be a normal affine surface over a field $K$ of characteristic zero and let $f,g,h \in \Aut (X_0)$ such that
  $f,g$ are loxodromic. Let $\Gamma_h \subset X_0 \times X_0$ be the graph of $h$ and suppose there exists $(x_0, y_0)
  \in X_0 (K) \times X_0 (K)$ such that
  \begin{equation}
    (\OO_f (x_0) \times \OO_g (y_0)) \cap \Gamma_h
    \label{eq:<+label+>}
  \end{equation}
  is infinite, then $\Gamma_h$ is $(f,g)-periodic$.
\end{thm}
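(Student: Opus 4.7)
The plan is to reduce the statement directly to Theorem~\ref{thm:relation-orbits} by conjugating the data through $h$. A pair $(p,q) \in X_0 \times X_0$ lies on $\Gamma_h$ exactly when $q = h(p)$, so the assumption that $(\OO_f(x_0) \times \OO_g(y_0)) \cap \Gamma_h$ is infinite says there are infinitely many pairs $(n,m) \in \Z \times \Z$ with $h(f^n(x_0)) = g^m(y_0)$. Setting $\tilde f := h \circ f \circ h^{-1} \in \Aut(X_0)$ and $\tilde x_0 := h(x_0)$, this rewrites as $\tilde f^n(\tilde x_0) = g^m(y_0)$ for infinitely many $(n,m)$. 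Since $h \in \Aut(X_0)$, the automorphism $\tilde f$ is conjugate to $f$, hence loxodromic of the same dynamical degree.

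My first step is to verify that $\OO_{\tilde f}(\tilde x_0) \cap \OO_g(y_0)$ is itself infinite. If $\OO_f(x_0)$ were finite, then $\OO_{\tilde f}(\tilde x_0) = h(\OO_f(x_0))$ would be finite, and infinitude of the original intersection would force $\OO_g(y_0)$ to revisit a fixed finite set infinitely often; this is impossible when $y_0$ is non-periodic, and forces a finite total intersection when it is periodic, contradicting the hypothesis. A symmetric argument handles $y_0$, so both orbits are infinite and distinct identifying pairs $(n,m)$ correspond to distinct $n$ and to distinct $m$, producing an infinite orbit intersection for $\tilde f$ and $g$.

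Now Theorem~\ref{thm:relation-orbits}, applied to the loxodromic pair $(\tilde f, g)$ in $\Aut(X_0)$, yields $N, M \in \Z \setminus \{0\}$ with $\tilde f^N = g^M$, which after conjugating back by $h$ reads
\begin{equation*}
  g^M \circ h \;=\; h \circ f^N.
\end{equation*}
Equivalently, $(f^N \times g^M)(\Gamma_h) = \Gamma_h$, so the orbit of $\Gamma_h$ under the action of $\langle f \rangle \times \langle g \rangle$ on $X_0 \times X_0$ is finite; this is the asserted $(f,g)$-periodicity of $\Gamma_h$.

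The main obstacle is the bookkeeping in the first step: one must carefully ensure that conjugation by $h$ does not collapse the infinite intersection down to a finite one, which amounts to excluding the periodic cases for $x_0$ and $y_0$. After this, the rest of the argument is a direct translation through $h$ together with a black-box invocation of Theorem~\ref{thm:relation-orbits}. A secondary point is that to upgrade the conclusion to the diagonal form $(f \times g)^k(\Gamma_h) = \Gamma_h$ one would further need $N = M$, equivalently $\lambda(f) = \lambda(g)$, which is not supplied by Theorem~\ref{thm:relation-orbits} alone.
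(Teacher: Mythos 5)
Your proof is correct and takes essentially the same route as the paper: reduce to the diagonal case by conjugating through $h$ and then invoke Theorem~\ref{thm:relation-orbits}. The only cosmetic difference is that you conjugate $f$ while the paper conjugates $g$, and your careful case analysis on periodicity of $x_0$ and $y_0$ is unnecessary, since $h$ is a bijection so $(p, h(p)) \mapsto h(p)$ is automatically a bijection from $(\OO_f(x_0) \times \OO_g(y_0)) \cap \Gamma_h$ onto $\OO_{\tilde f}(\tilde x_0) \cap \OO_g(y_0)$.
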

In Theorem 1.5 of \cite{ghiocaLinearRelationsPolynomial2012}, the authors were considering lines inside $\C \times \C$
which are exactly graphs of affine automorphisms of $\C$ except for horizontal and vertical lines but in that case the
result is trivial.

\subsection*{Acknowledgements}\label{subsec:acknowledgments}
I thank John Lesieutre who told me about the problem of birational maps of $\P^2$ with orbits meeting infinitely
many times and Serge Cantat for some discussions related to this problem.

\section{Some preparations}\label{sec:preparations}
\subsection{Technical lemmas}\label{subsec:technical-lemmas}
We state here some technical lemmas for the proof of Theorem \ref{thm:relation-orbits}. For $f \in \Aut (X_0)$ and
$p \in X_0 (K)$, we define
\begin{equation}
  \OO_{f, +} (p) := \left\{ f^n (p) : n \in \Z_{\geq 0} \right\}, \quad \OO_{f, -} (p) := \left\{ f^n (p) : n \in
  \Z_{\leq 0} \right\}.
  \label{eq:<+label+>}
\end{equation}

If $S \subset \Z$ is a subset of integers, the \emph{Banach density} of $S$ is defined as
  \begin{equation}
    \delta (S) = \limsup_{\left| I \right| \rightarrow +\infty} \frac{\left| S \cap I \right|}{\left| I \right|}.
    \label{eq:<+label+>}
  \end{equation}
  where $I$ runs through intervals in $\Z$. In particular, if $S_1, \dots, S_r \subset \Z$, then
  \begin{equation}
    \delta (S_1 \cup \dots \cup S_r) \leq \delta (S_1) + \dots \delta (S_r).
    \label{eq:<+label+>}
  \end{equation}
  So, if $\delta (S_1 \cup \dots \cup S_r) > 0$, one of the $S_i$ must also be of positive Banach density.

  If $\OO_f (p) \cap \OO_g (q)$ is infinite, then we define a map $\iota: \OO_f (p) \cap \OO_g (q) \rightarrow \Z$ as
  follows. If $x = f^n (p) = g^m (q)$, then $\iota (x) = n$. This is a well defined injective map because $p$ cannot be
  $f$-periodic. So we can see $\OO_f (p) \cap \OO_g (q)$ as a subset of $\Z$ and define its Banach density using the map
  $\iota$.

\begin{lemme}\label{lemme:reductions}
  Let $X_0$ be a normal affine surface and $f,g \in \Aut (X_0)$ be two automorphisms such that there exists $p,q \in X_0
  (K)$ such that
  \begin{equation}
    \OO_f (p) \cap \OO_g (q)
    \label{eq:<+label+>}
  \end{equation}
  is infinite (resp. of positive Banach density). We have the following
  \begin{enumerate}
    \item \label{item:plus} Up to changing $f$ or $g$ by their inverse, we can suppose that $\OO_{f,+} (p) \cap
      \OO_{g,+} (p)$ is infinite (resp. of positive Banach density).
    \item \label{item:same-point} We can suppose that $p=q$.
    \item \label{item:iterate} For any $n,m \in \Z \setminus \left\{ 0 \right\}$, there exists $p ', q' \in X_0 (K)$ such that
      $\OO_f^n (p ') \cap \OO_g^m (q')$ is infinite (resp. of positive Banach density).
  \end{enumerate}
  Furthermore, if we assume that the Banach density of the set
  \begin{equation}
    \left\{ n \in \Z : \exists m \in \Z, f^n (p) = g^m (q) \right\}
\label{eq:<+label+>}
\end{equation}
is positive, then we can still assume positivity of the density after any of these 3 reductions.
\end{lemme}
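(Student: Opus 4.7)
The three reductions are of the same flavor: write the given intersection as a finite union of pieces of the same type, each indexed (via $\iota$) by a subset of $\Z$, then use subadditivity of Banach density together with infiniteness/positive density to pigeonhole onto one piece. The key identity to keep in mind is that for any $h \in \Aut(X_0)$, $\OO_{h^{-1},+}(q) = \OO_{h,-}(q)$, so conjugating by an inverse interchanges forward and backward orbits.

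For \eqref{item:plus}, I would write
\begin{equation*}
\OO_f(p) \cap \OO_g(q) = \bigcup_{\varepsilon, \eta \in \{+,-\}} \OO_{f,\varepsilon}(p) \cap \OO_{g,\eta}(q),
\end{equation*}
a union of four sets. By finite subadditivity of Banach density (and by pigeonhole for infiniteness), one of the four signed intersections $\OO_{f,\varepsilon}(p) \cap \OO_{g,\eta}(q)$ must retain the hypothesis. Replacing $f$ by $f^{-1}$ (resp.\ $g$ by $g^{-1}$) if $\varepsilon = -$ (resp.\ $\eta = -$) reduces to the case $\varepsilon = \eta = +$. The index map $\iota$ may change sign in the process, but Banach density in $\Z$ is invariant under $n \mapsto -n$.

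For \eqref{item:same-point}, pick any element $x = f^{n_0}(p) = g^{m_0}(q)$ in $\OO_f(p) \cap \OO_g(q)$. Since $f$ and $g$ are automorphisms, $\OO_f(x) = \OO_f(p)$ and $\OO_g(x) = \OO_g(q)$ as subsets of $X_0(K)$, so the intersection is unchanged. The associated index map $\iota_x$ for the new base point differs from $\iota$ by the shift $n \mapsto n - n_0$, and Banach density in $\Z$ is translation-invariant.

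For \eqref{item:iterate}, write
\begin{equation*}
\OO_f(p) = \bigsqcup_{i=0}^{|n|-1} \OO_{f^n}\bigl(f^i(p)\bigr), \qquad \OO_g(q) = \bigsqcup_{j=0}^{|m|-1} \OO_{g^m}\bigl(g^j(q)\bigr),
\end{equation*}
so that $\OO_f(p) \cap \OO_g(q)$ decomposes as the finite union over $(i,j)$ of the sets $\OO_{f^n}(f^i(p)) \cap \OO_{g^m}(g^j(q))$. Again by pigeonhole / subadditivity, one such piece $(p',q') = (f^i(p), g^j(q))$ inherits the hypothesis. For the Banach-density statement, I would note that the index map for the pair $(f^n, g^m)$ starting from $f^i(p)$ sends the set of relevant $k$'s into the preimage of $S \cap (n\Z + i)$ under the bijection $k \mapsto nk+i$, so positivity of $\delta(S)$ forces positivity of $\delta$ for at least one of the $|n|$ arithmetic-progression pieces. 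The last assertion of the lemma is then automatic: each reduction is carried out by partitioning into finitely many index-theoretically affine pieces, and Banach density passes to one of them by the same pigeonhole. I do not foresee a real obstacle; the only point requiring care is keeping track of how $\iota$ transforms (shift, sign change, and dilation by $n$ or $m$) so that positivity of Banach density is genuinely preserved rather than merely positivity of cardinality.
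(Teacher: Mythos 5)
Your proposal is correct and follows essentially the same route as the paper: decompose the intersection into finitely many pieces (the four signed pieces for (1), the $|n|\cdot|m|$ pieces for (3)), pigeonhole using subadditivity of Banach density, and track how the index map $\iota$ transforms under sign change, translation and dilation. The only (harmless) deviation is in (2): the paper first passes to forward orbits via (1) and discards the finitely many pairs with $n < n_0$ or $m < m_0$, whereas you use that the two-sided orbits of automorphisms are unchanged when the base point moves along the orbit, together with translation invariance of Banach density --- both arguments work, and yours is if anything slightly cleaner for the statement as written.
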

\begin{proof}
  Item (1) follows from
  \begin{equation}
    \OO_f (p) \cap \OO_g (q) = \bigsqcup_{\epsilon_1, \epsilon_2 \in \left\{ +,- \right\}} \OO_{f, \epsilon_1}
    (p) \cap \OO_{g, \epsilon_2} (q).
    \label{eq:<+label+>}
  \end{equation}
  So, one of this four set is infinite (resp. of positive Banach density) and up to changing $f$ or $g$ by their inverse
  we can assume that $\OO_{f,+} (p) \cap \OO_{g, +} (q)$ is infinite (resp. of positive Banach density).

  For the proof of item (2), we first replace $f$ or $g$ by their inverse such that $\OO_{f,+} (p) \cap \OO_{g, +} (q)$
  is infinite using (1). Then, let $n_0, m_0 \in \Z_{\geq 0}$ be such that $f^{n_0} (p) = g^{m_0} (q)$. Define $r =
  f^{n_0} (p)$. Because $p$ is
  not $f$-periodic and $q$ is not $g$-periodic, the set
  \begin{equation}
    \left\{ (n,m) \in \Z_{\geq 0}^2 : f^n (p) = g^m (q), n < n_0 \text{ or } m < m_0 \right\}
    \label{eq:<+label+>}
  \end{equation}
  is finite, therefore the set
  \begin{equation}
    \left\{ (n,m) \in \Z_{\geq 0}^2 : f^n (p) = g^m (q), n \geq n_0, m \geq m_0 \right\} = \left\{ (n,m) \in
    Z_{\geq 0}^2 : f^{n - n_0} (r) = g^{m - m_0} (r) \right\}
    \label{eq:<+label+>}
  \end{equation}
  is infinite (resp. of positive Banach density). This shows (2).

  Item (3) follows from the equality
  \begin{equation}
    \OO_{f} (p) \cap \OO_g (q) = \bigsqcup_{l = 0, \dots, \left| n \right| -1} \bigsqcup_{k = 0, \dots, \left| m
    \right| -1} \OO_{f^n} (f^l (p)) \cap \OO_{g^m} (g^k (q))
    \label{eq:<+label+>}
  \end{equation}
  because one of these subsets must be infinite (resp. of positive Banach density).
\end{proof}

\subsection{Heights}\label{subsec:heights}
If $K$ is a number field, we define $\cM(K)$ as the set of normalised absolute values over $K$. They are either
archimedean and are defined as $\left| t \right| = \left| \sigma (t) \right|_\C$ where $\sigma : K \hookrightarrow \C$
is an embedding and $\left| \cdot \right|_\C$ is the usual absolute value over $\C$. Or they are non-archimedean if they
satisfy the inequality $\left| x+y \right| \leq \max (\left| x \right|, \left| y \right|)$. A non-archimedean absolute
value extends the $p$-adic absolute value over $\Q$ for some prime number $p$ and the normalisation is given by $\left| p
\right| = \frac{1}{p}$.

If $K$ is finitely generated over a finite field $\F$
such that $\tr.deg K / \F \geq 1$, we fix a normal projective variety $B$ over $\F$ with function field $K$. We
define $\cM (K)$ as the set of points of codimension 1 in $B$ (the set $\cM(K)$ depend on the choice of $B$ but this
won't have an importance so we omit it in the notation). Every such point is the generic point $\eta_E$ of an irreducible
codimension 1 subvariety $E \subset B$ and it induces an absolute value over $K$ as follows. The local ring at
$\eta_E$ is a discrete valuation ring and we write $\ord_E$ for the associated valuation which is the order of vanishing
at $\eta_E$. This defines the absolute value $\left| \cdot \right|_E = e^{-\ord_E (\cdot)}$ over $K$. Every absolute
value in $\cM(K)$ is non-archimedean in this case.

In either case, for an element $v \in \cM(K)$, we write $\left| \cdot \right|_v$ for the associated absolute value over
$K$ and we write $K_v$ for the completion of $K$ with respect to $v$. We call the \emph{Euclidian} topology, the
topology induced by $K_v$. In particular, if $X$ is a projective variety over
$K$, then $X(K_v)$ is a compact space. We define the associated norm over $K^n$
\begin{equation}
  \forall x = (x_1, \dots, x_n) \in K^n, \quad \left| \left| x \right| \right|_v := \max_i \left| x_i \right|_v.
  \label{eq:<+label+>}
\end{equation}
The \emph{Weil height} over $\A^n_K (K)$ is the function $h : \A_K^n (K) \rightarrow \R_{\geq 0}$ defined as
\begin{equation}
  h (p) = \sum_{v \in \cM(K)} \log^+  \left| \left| p \right| \right|_v
  \label{eq:<+label+>}
\end{equation}
where $\log^+ = \max(\log, 0)$.
The height function is well defined as for $t \in K$ there are only finitely many $v \in \cM(K)$ such that $\left| t
\right|_v \neq 1$. It satisfies the \emph{Northcott property}: for any $B \geq 0$, the set
\begin{equation}
  \left\{ p \in \A^n_K (K) : h(p) \leq B \right\}   \label{eq:<+label+>}
\end{equation}
is finite. In particular, if $f$ is a polynomial map over $\A^n$ and the sequence $(h(f^n(p)))$ is bounded, then
$\OO_f (p)$ is finite.

\subsection{Moriwaki height}\label{subsec:moriwaki-height}
If $K$ is a finitely generated field over $\Q$ with $\tr.deg K / \Q \geq 1$ we use Moriwaki heights. Here is briefly
how they work (see \cite{moriwakiArithmeticHeightFunctions2000} and
\cite{chenArithmeticIntersectionTheory2021}). Let $B$ be a normal and flat projective variety over $\spec \Z$ with
function field $K$. We define the set $\cM(K)$ of normalized absolute values over $K$ as follows.

For every prime number $p$, the fibre $B_p$ is the union of a finite number of irreducible components $\Gamma$ and except for a
finite number of prime $p$, $B_p$ is irreducible. Every such irreducible component is of codimension 1 in $B$, so we can define
the function $\ord_\Gamma : K^\times \rightarrow \Z$ which is the order of vanishing along $\Gamma$. Every $\Gamma$
obtained like this induces a non-archimedean absolute value over $K$ of the form
\begin{equation}
  \left| \lambda \right|_\Gamma = e^{- \ord_\Gamma}.
  \label{eq:<+label+>}
\end{equation}
If $\Gamma \subset B_p$, then $\left| \cdot \right|_\Gamma$ extends the $p$-adic absolute value over $\Q$.

For every $\C$-point $b \in B(\C)$, we have the archimedean absolute value
\begin{equation}
  \forall \lambda \in K, \quad \left| \lambda \right|_b := \left| \lambda (b) \right|_\C
  \label{eq:<+label+>}
\end{equation}
it is well defined if $b$ is not a pole of $\lambda$. We define
\begin{equation}
  \cM(K) = B (\C) \sqcup \bigsqcup_{p} \left\{ \left| \cdot \right|_\Gamma : \Gamma \subset B_p \right\}
  \label{eq:<+label+>}
\end{equation}
again $\cM (K)$ depends on $B$ but we omit it in the notation.

Write $d+1$ for the dimension of $B$ over $\spec \Z$. In particular, $\dim_\C B_\C = d$. An \emph{arithmetic polarisation} of
$K$ over $B$ in the sense of \cite{moriwakiArithmeticHeightFunctions2000} is the data of a big and nef
nef line bundle $\sL$ over $B$ and a plurisubharmonic metrisation of $L$ where $L$ is the line bundle induced by
$\sL$ over the analytic manifold $B(\C)$. We write $\overline L$ for the data of $L$ and its metrisation. It yields a finite
positive Borel measure $\mu_\C = c_1(\overline L)^{\dim B_\C}$ over $B(\C)$ of total mass $L^d$ and nonnegative numbers
$a_\Gamma := \sL_{|\Gamma}^{d}$ such that
\begin{enumerate}
  \item The measure $\mu_\C$ does not charge any algebraic subset.
\item For every prime number $p$,
\begin{equation}
  \sum_{\Gamma \subset B_p} a_\Gamma = L^d.
\label{eq:<+label+>}
\end{equation}
\end{enumerate}

\begin{ex}\label{ex:weil-metric}
  Let $B = \P^n_\Z$ with homogeneous coordinates $T_0, \dots, T_n$ and $\sL = \OO_B (1)$. We equip $\OO_{\P^d_\C} (1)$
  with the \emph{Weil} metric given by
  \begin{equation}
    \left| \left| a_0 T_0 + \dots + a_d T_d \right| \right| = \frac{\left| a_0 T_0 + \dots + a_d T_d \right|}{\max\left(
    \left| T_0 \right|, \dots, \left| T_d \right| \right)}.
    \label{eq:<+label+>}
  \end{equation}
  From \cite{chambert-loirHeightsMeasuresAnalytic2011} p.9, we have that the measure $\mu_\C = c_1
  (\overline{\OO(1)})^d$ is the Haar measure on the $n$-dimensional torus
  \begin{equation}
    (\CS^1)^n =
    \left\{ \left| T_0 \right| = \dots = \left| T_n \right| \right\}.
  \end{equation}
\end{ex}

\begin{lemme}\label{lemme:compact-support-measure-arithmetic-polarisation}
  Let $B_\Q$ be a normal projective variety over $\Q$ with function field $K$ and let $H$ be a very ample effective divisor over
  $B_\Q$, then there exists a flat normal projective variety $B$ over $\spec \Z$ with generic fibre $B_\Q$ and an arithmetic
  polarisation of $K$ over $B$ such that $\mu_\C$ has compact support in $B(\C) \setminus H(\C)$ \end{lemme}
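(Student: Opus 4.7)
The plan is to let $B$ be a $\Z$-model of $B_\Q$ in which $H$ extends to (the restriction of) a coordinate hyperplane in projective space, and to equip $\OO_B(1)$ with the Weil metric of Example~\ref{ex:weil-metric}. On $\P^n(\C)$ this metric's top Monge--Amp\`ere is the Haar measure on the compact torus $(\CS^1)^n$, which avoids any coordinate hyperplane; pulling back along the embedding will produce the desired support property for $\mu_\C$.

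Very ampleness of $H$ gives a closed immersion $\phi_\Q : B_\Q \hookrightarrow \P^n_\Q$ with $H = \phi_\Q^*\{T_0 = 0\}$. I would take $B'$ to be the scheme-theoretic closure of $\phi_\Q(B_\Q)$ in $\P^n_\Z$ and let $B$ be its normalisation. Then $B$ is normal and projective, and flat over $\spec\Z$ since its structure sheaf is $\Z$-torsion free; because $B_\Q$ is already normal, the generic fibre of $B$ is $B_\Q$. The resulting morphism $\phi : B \to \P^n_\Z$ is finite, so $\sL := \phi^* \OO_{\P^n_\Z}(1)$ is ample on $B$, in particular big and nef, and restricts to $\OO_{B_\Q}(H)$ on the generic fibre.

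Pulling back the Weil metric on $\OO_{\P^n_\C}(1)$ along $\phi_\C$ yields a continuous psh metric on $L := \sL|_{B_\C}$, providing an arithmetic polarisation $\overline L$: condition~(1) (no charge on algebraic subsets) holds because the Bedford--Taylor Monge--Amp\`ere of a continuous psh metric does not charge pluripolar sets, and condition~(2) on $\sum_\Gamma a_\Gamma = L^d$ follows from flatness of $B \to \spec\Z$ together with the projection formula applied to the fibre class $[B_p]$. By Example~\ref{ex:weil-metric} the measure $c_1(\overline{\OO(1)})^n$ on $\P^n(\C)$ is the Haar measure on $(\CS^1)^n = \{|T_0| = \dots = |T_n|\}$, and by functoriality of the Chambert--Loir construction under the closed immersion $\phi_\C$, the measure $\mu_\C = c_1(\overline L)^d$ is supported in $\phi_\C^{-1}((\CS^1)^n)$. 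This preimage is compact, since $\phi_\C$ is proper and $(\CS^1)^n$ is compact, and is disjoint from $H(\C) = \phi_\C^{-1}(\{T_0 = 0\})$, as $(\CS^1)^n \cap \{T_0 = 0\} = \emptyset$.

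The step I expect to be the main obstacle is this last one: the Weil metric is continuous but not smooth, so the claim that $\mu_\C$ sits inside $\phi_\C^{-1}((\CS^1)^n)$ requires either invoking the functoriality of Chambert--Loir's non-smooth Monge--Amp\`ere under closed immersions or approximating the Weil metric by smooth semipositive metrics whose Monge--Amp\`ere measures concentrate near $(\CS^1)^n$ and passing to the limit. The remaining algebraic ingredients (flatness, normality, ampleness of $\sL$, and verification of condition~(2)) are routine once the embedding $\phi$ is fixed.
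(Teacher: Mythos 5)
Your construction is the same as the paper's: embed $B_\Q$ in $\P^N_\Q$ so that $H$ is the hyperplane section $\{T_0=0\}$, take (the normalisation of) the closure in $\P^N_\Z$, and pull back $\OO_{\P^N_\Z}(1)$ with the Weil metric; the algebraic steps you spell out (flatness, normality of the model, bigness and nefness of $\sL$, conditions (1) and (2) for the polarisation) are fine and match the paper. The problem is exactly the step you flag as the main obstacle, and neither of your two remedies closes it. There is no ``functoriality of the Chambert--Loir construction under closed immersions'' of the kind you invoke: $\mu_\C$ is the Monge--Amp\`ere measure on $B(\C)$ of the \emph{restricted} potential $\log\max_i|T_i|$, it is not obtained by restricting the ambient measure $c_1(\overline{\OO(1)})^N$ on $\P^N(\C)$, and its support need not be contained in $\phi_\C^{-1}\bigl((\CS^1)^N\bigr)$; approximating the Weil metric by smooth semipositive metrics cannot rescue the claim either, because in this generality the claim is false. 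Concretely, embed $B_\Q=\P^1$ in $\P^2$ by $[s:t]\mapsto[s^2:t^2:t^2+st]$ (an embedding by a complete linear system, with $H=\{T_0=0\}\cap B$ the very ample divisor $2\,[0:1]$). In the chart $t=1$, near $s=0$ the potential equals $\log\max(|s|^2,1,|1+s|)=\log^+|1+s|$, whose $dd^c$ is the uniform measure on the circle $|1+s|=1$, which passes through $s=0$. Hence $\supp\mu_\C$ meets $H(\C)$, and it is certainly not contained in $\{|t_1|=|t_2|=1\}$.

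So the conclusion depends on the choice of the embedding and of the basis $T_0,\dots,T_N$, and your argument (like the paper's own two-line proof, which asserts the same containment without justification) does not address this. What is true, by Bedford--Taylor locality, is weaker: on the open set where at most $m\le d=\dim B_\C$ of the $|T_i|$ attain the maximum, one has locally $u=\log|T_{i_1}|+\max(0,\Re h_2,\dots,\Re h_m)$ with $h_j$ holomorphic, so $dd^c u$ is pulled back from $\C^{m-1}$ and $(dd^c u|_{B})^{d}=0$ there; thus $\supp\mu_\C$ is only constrained to lie in the locus where at least $d+1$ of the $|T_i|$ tie for the maximum. To finish along these lines you must use the remaining freedom (choice of basis, replacing $H$ by a multiple, or modifying the metric near $H$) to guarantee that this tie locus is disjoint from $H(\C)$ --- a genuine condition, not automatic even for a generic basis once $d\ge 2$ --- or argue the support statement by a direct local analysis near $H(\C)$. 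As it stands, the key assertion of your proof (and the corresponding sentence in the paper) is unproved, so the argument has a real gap.
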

\begin{proof}
  Let $B_\Q \hookrightarrow \P^N_\Q$ be an embedding such that $H$ is the intersection of of $B_\Q$ with the
  hyperplane $T_0 = 0$. We can find a normal flat projective variety $B$ over $\spec \Z$ with an embedding $B
  \hookrightarrow \P^N_\Z$ such that the generic fibre is $B_\Q \hookrightarrow \P^N_\Q$. The pull-back of
  $\OO_{\P^N_\Z}(1)$ equipped with the Weil metric induces an arithmetic polarisation of $K$ over $B$ and the
  support of $\mu_C$ is a compact subset of $B (\C) \setminus H(\C)$ contained in $\left\{ \left| t_1 \right| = \dots =
  \left| t_N \right| = 1 \right\} \subset \A^N (\C)$ where $t_i = \frac{T_i}{T_0}$ .
\end{proof}

The Weil height over $\A^N_K (K)$ associated to this arithmetic polarisation of $K$ over $B$ is
\begin{equation}
  \forall x \in \A^N (K), \quad h (x) = \sum_\Gamma a_\Gamma \log^+ \left| \left| x \right| \right|_\Gamma +
  \int_{B(\C)} \log^+ \left| \left| x(b) \right| \right|_\C d \mu_\C (b).
  \label{eq:<+label+>}
\end{equation}
The integral is well defined because $\mu_\C$ does not charge algebraic subsets, therefore the union of all the poles
over $B(\C)$ of every $\lambda \in K$ has $\mu_\C$ measure zero because $K$ is countable.
It also satisfies the Northcott property: the set
\begin{equation}
  \left\{x \in \P^N (K) : h (x) \leq A  \right\}
  \label{eq:<+label+>}
\end{equation}
is finite.

\subsection{Families of varieties}\label{subsec:family-of-varieties}
Let $K$ be a finitely generated field over $\Q$ with $\tr.deg K/\Q \geq 1$ and $X$ a quasiprojective variety over $K$. A
\emph{model} of $X$ is a projective morphism $q: \sX \rightarrow B$ between quasiprojective varieties over $\Q$
such that the generic fibre is isomorphic to $X \rightarrow \spec K$, in particular $K$ is the function field of $B$.
There are two types of irreducible subvarieties $\sY$ in $\sX$:
\begin{itemize}
  \item Horizontal subvarieties, they satisfy $q(\sY) = B$ and if $Y = \sY_{|X}$ is the generic fibre, then $\sY$ is
    exactly the closure of $Y$ in $\sX$.
  \item Vertical subvarieties, they satisfy $q(\sY) \neq B$.
\end{itemize}
If $p \in X(K)$, then $p$ induces a rational map $p : B \dashrightarrow \sX$. If $V \subset B$ is an open subset over
which $p$ is defined, then we write $p(V)$ for the image of $p$ in $\sX_V = \sX \times_B V = q^{-1}(V)$. The subvariety $p(V)$ is
also the closure of $p \in X(K)$ in $\sX_V$.
We use similar notations for the analytic manifolds $\sX (\C) \rightarrow B(\C)$.
Finally, if $f :X \rightarrow X$ is a dominant endomorphism, then there exists an open subset $V \subset B$ such that $f$ extends
to a dominant endomorphism $f : \sX_V \rightarrow \sX_V$.

\section{Valuations and Picard-Manin space}\label{sec:valuations-picard-manin}
\subsection{Picard-Manin space of a projective surface}\label{subsec:picard-manin}
Let $X$ be a rational projective surface. The Picard-Manin space of $X$ is defined as follows, consider
\begin{equation}
  \cC (X) = \varinjlim_{Y \rightarrow X} \NS (Y)_\R
  \label{eq:<+label+>}
\end{equation}
where the direct limit is over every projective surface with a birational morphism $\pi: Y \rightarrow X$, the compatibility
morphisms are given by the pull back morphisms $\pi^* : \NS (X)_\R \hookrightarrow \NS(Y)_\R$. We also define
\begin{equation}
  \cW (X) = \varprojlim_{Y \rightarrow X} \NS (Y)_\R
  \label{eq:<+label+>}
\end{equation}
where here the compatibility morphisms are given by the pushforward morphisms $\pi_* : \NS (Y) \rightarrow \NS
(X)$.

The intersection form on every $\NS(Y)_\R$ is of Minkowski type and induces a non degenerate bilinear form over
$\cC (X)$ with signature $(1, \infty)$. The \emph{Picard-Manin} space $\overline \cC(X)$ of $X$ is defined as the
Hilbert completion with respect to this intersection form. The hyperbolic space $\HH_X^\infty$ is defined as the
hyperboloid
\begin{equation}
  \HH_X^\infty = \left\{ Z \in \overline \cC (X) : Z^2 = 1, \quad Z \cdot H > 0 \right\}
  \label{eq:<+label+>}
\end{equation}
where $H$ is a fixed ample class in some model over $X$. The boundary of this hyperbolic space is $\mathbb P \left\{
Z : Z^2 = 0, Z \cdot H > 0 \right\}$.

A birational map $f \in \Bir(X)$ acts by pull-back on $\cC(X)$ and also over $\HH_X^\infty$. The action over
$\HH_X^\infty$ is by isometries. We have the following theorem
\begin{thm}[\cite{cantatGroupesTransformationsBirationnelles2011}]\label{thm:cantat-hyperbolic}
  We have the following classification
  \begin{enumerate}
    \item If $\lambda (f) > 1$, $f$ is \emph{loxodromic} it admits exactly two distinct fixed point $\theta^+, \theta^-
      \in \partial \HH_X^\infty$ such that $f^* \theta^+ = \lambda (f) \theta^+$ and $f^* \theta^- = 1 /
      \lambda(f) \theta^-$.
    \item If $\lambda (f) = 1$ and $f^*$ has no fixed point in $\HH_X^\infty$, then $f$ is \emph{parabolic}, then it
      admits a unique fixed point $\theta \in \partial
      \HH_X^\infty$ such that $f^* \theta = \theta$ and $\theta \in \cC$ is the class of an invariant fibration.
    \item If $\lambda (f) = 1$ and $f^*$ has a fixed point in $\HH_X^\infty$ then $f$ is \emph{elliptic} and there
      exists a model $Y \rightarrow X$ such that $f_Y$ is an automorphism.
  \end{enumerate}
\end{thm}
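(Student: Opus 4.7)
The plan is to reduce the classification to the abstract trichotomy for isometries of real hyperbolic spaces, applied to the action of $f^{*}$ on $\HH_X^\infty$. The first point is that pullback by birational maps is functorial on the direct limit $\cC(X)$ --- this is the defining feature of the Picard--Manin construction --- and preserves the Minkowski form; extending by continuity, $f^{*}$ is an $\R$-linear isometry of $\overline{\cC}(X)$ and in particular an isometry of $\HH_X^\infty$.

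Next I would invoke the standard trichotomy for an isometry $\varphi$ of a $\mathrm{CAT}(-1)$ real hyperbolic space: either $\varphi$ fixes an interior point (elliptic), or its translation length $\ell(\varphi)=\inf_{Z}d(Z,\varphi Z)$ vanishes without being attained and $\varphi$ has a unique boundary fixed point (parabolic), or $\ell(\varphi)>0$, attained on a translation axis between two isotropic fixed classes $\theta^{\pm}$ with $\varphi^{*}\theta^{\pm}=e^{\pm\ell(\varphi)}\theta^{\pm}$ (loxodromic). The crucial identification is $\lambda(f)=e^{\ell(f^{*})}$. Normalising $H^{2}=1$ we have $\cosh d(H,(f^{*})^{n}H)=H\cdot(f^{*})^{n}H$, and functoriality on $\overline{\cC}(X)$ gives $(f^{*})^{n}H=(f^{n})^{*}H$, so
\[
\frac{1}{n}d(H,(f^{*})^{n}H)\;\longrightarrow\;\log\lambda(f)
\]
by the very definition of the dynamical degree. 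Case (1) follows immediately.

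For case (3), a fixed class $Z\in\HH_X^\infty$ forces the orbit $\{(f^{*})^{n}H\}$ to stay bounded in $\overline{\cC}(X)$. A Diller--Favre style regularisation argument then applies: boundedness prevents infinitely many distinct divisorial contractions from occurring in the iterates, so blowing up the finitely many base points of the family $\{f^{n}\}$ simultaneously produces a model $Y\to X$ on which $f$ is an automorphism. For case (2), the unique boundary fixed class $\theta$ is isotropic; approximating $\theta$ by nef classes on models $Y\to X$ and using Riemann--Roch, one realises a positive multiple of such an approximation as the class of a fibre of a genuine fibration $Y\to C$, and $f^{*}\theta=\theta$ transfers to invariance of the fibration under $f$.

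The main obstacle is the identification $\lambda(f)=e^{\ell(f^{*})}$: this hinges on working with the direct limit $\cC(X)$ so that $f^{*}$ is truly functorial and $(f^{*})^{n}=(f^{n})^{*}$, without which $(f^{n})^{*}H\cdot H$ on a fixed model need not track the hyperbolic distance. Upgrading an invariant isotropic class to a genuine invariant fibration in case (2) is equally delicate, relying on the Hodge index theorem and Riemann--Roch positivity for the approximating divisors.
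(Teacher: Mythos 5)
This theorem is not proved in the paper at all: it is imported verbatim from the cited work of Cantat, and your outline reconstructs exactly the strategy of that cited proof (isometric action of $\Bir(X)$ on the Picard--Manin hyperbolic space, plus the identification $\lambda(f)=e^{\ell(f^*)}$ via $\cosh d(H,(f^n)^*H)=H\cdot (f^n)^*H$). So the route is the right one; but two of your steps compress precisely the points that carry the real content. First, ``Case (1) follows immediately'' is too quick: $\HH_X^\infty$ is an infinite-dimensional, non-proper space, so the naive trichotomy does not hand you for free that a positive translation length is attained on an axis with two isotropic eigenclasses scaled by $\lambda(f)^{\pm 1}$; in the source this is obtained either by proving the trichotomy in this setting through the hyperboloid model, or more directly by constructing $\theta^{\pm}$ as limits of the normalized nef classes $\lambda(f)^{-n}(f^{\pm n})^*H$, using subadditivity of degrees and the Hodge index theorem. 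Likewise, the functoriality $(f^n)^*=(f^*)^n$ on the direct limit and the fact that $f^*$ extends to a bounded isometry of the completion $\overline{\cC}(X)$ are theorems about the Picard--Manin construction in dimension two, not definitional features; you are right that everything hinges on them, but they require proof.

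Second, in the parabolic case your appeal to ``Riemann--Roch positivity for approximating nef classes'' does not by itself produce the invariant fibration, and even the existence and uniqueness of a boundary fixed point for a parabolic isometry of a non-proper $\HH^\infty$ needs an argument. The actual proof (Gizatullin, Diller--Favre, Cantat) goes through the dichotomy of linear versus quadratic growth of $(f^n)^*H\cdot H$, shows that the invariant isotropic class --- which a priori only lives in the completion --- descends to an honest nef class with self-intersection $0$ on a finite model, proves it is proportional to an effective class via Riemann--Roch and Hodge index, and only then reads off an $f$-invariant rational or genus-one fibration; this is the genuinely algebro-geometric half of the statement and cannot be deduced from the hyperbolic-space formalism alone. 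The elliptic case as you sketch it similarly rests on the Diller--Favre regularization theorem, which is fine to quote but is again a cited result rather than a consequence of bounded orbits per se.
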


If $\theta \in \partial \HH_X^\infty$, we write $\Stab (\theta)$ for the subgroup of $\Bir (\P^2)$ defined by
\begin{equation}
  \Stab (\theta) = \left\{ f \in \Bir (\P^2) : \exists t_f > 0, f^* \theta = t_f \theta \right\}.
  \label{eq:<+label+>}
\end{equation}

We give an important example of a subgroup of $\Aut(\A^2_K) \subset \Bir (\P^2_K)$ when $K$ is of positive
characteristic $p$. Let $A$ be the $K$ algebra of polynomials in the Frobenius map $z \mapsto z^p$. It is a
noncommutative algebra and we write $\GL_2 (A)$ for the set of $2 \times 2$ matrices with coefficients in $A$ that are
invertible over $A$. It acts on $\A^2_K$ via the following action
\begin{equation}
  \begin{pmatrix} a & b \\ c & d\end{pmatrix} \cdot (x,y) = \left( a(x) + b (y), c(x) + d(y) \right)
  \label{eq:<+label+>}
\end{equation}
and can be seen as a subgroup of $\Aut (\A^2_K) \subset \Bir (\P^2_K)$.
The group $\GL_2 (A)$ is the normaliser of the additive group $\G_a (K) \times \G_a (K)$ (acting on the plane by
translations) inside $\Bir (\P^2_K)$ the group of birational transformation of the projective plane. We define
\begin{equation}
  \Aut_F (\A^2_K) := \GL_2 (A) \ltimes (\G_a (K) \times \G_a (K)).
\end{equation}
The letter $F$ stands for Frobenius. In particular, for any
loxodromic automorphism $f \in \Aut_F (\A^2_K)$, the group $\G_a (K) \times \G_a (K)$ fixes $\theta_f^+$ and
$\theta_f^-$.

\begin{prop}[\cite{urechSubgroupsEllipticElements2021}]\label{prop:stab-theta}
  Let $f \in \Bir (\P^2)$ be a loxodromic birational transformation, then $\Stab (\theta^+_f)$ contains $\langle f
  \rangle$ as a subgroup of finite index unless $f$ is conjugated to
  \begin{enumerate}
    \item an automorphism of the algebraic torus.
    \item \label{item:automorphism-affine} an automorphism of $\A^2_K$ of the form $g (x,y) = \left( a(x) + b (y), c(x)
      + d (y) \right)$ where $a,b,c,d \in K[t^p]$ and $\car K = p > 0.$
  \end{enumerate}
\end{prop}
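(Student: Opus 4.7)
The plan is to analyze the \emph{scaling homomorphism}
\[
  \rho : \Stab(\theta^+_f) \longrightarrow \R_{>0}, \qquad h \mapsto t_h,
\]
and to show that, outside of the exceptional cases (1) and (2), the kernel $\ker\rho$ is finite and the image is a discrete subgroup of $\R_{>0}$ containing $\lambda(f)$, from which $[\Stab(\theta^+_f):\langle f\rangle]<\infty$ follows at once.

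The first step is the analysis of $\ker\rho$. An element $h\in\ker\rho$ satisfies $h^*\theta^+_f=\theta^+_f$. By Theorem \ref{thm:cantat-hyperbolic}, if $h$ were loxodromic its attracting fixed point would be $\theta^+_h$ with $h^*\theta^+_h=\lambda(h)\,\theta^+_h$, so the identification $\theta^+_h=\theta^+_f$ would force $t_h=\lambda(h)>1$, and similarly $\theta^-_h=\theta^+_f$ would force $t_h=1/\lambda(h)<1$; neither is compatible with $t_h=1$. Hence $\ker\rho$ contains only elliptic and parabolic elements. A parabolic $h$ has a unique boundary fixed point, equal to the class of an $h$-invariant rational pencil, which belongs to $\cC(X)$. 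Since $\theta^+_f$ is an ``irrational'' boundary point (the attracting direction of a hyperbolic isometry of $\HH_X^\infty$), the existence of such an $h$ forces $f$ to preserve the same pencil---which, combined with the loxodromic nature of $f$, only occurs in the toric case~(1).

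The second step deals with the elliptic elements of $\ker\rho$. By Theorem \ref{thm:cantat-hyperbolic}(3), each such element is regularizable on some smooth projective model. Since $\ker\rho$ is normalized by $f$, one obtains a compatible family of models on which the entire subgroup acts by automorphisms with bounded dynamical degree under conjugation. Invoking the classification of algebraic subgroups of $\Bir(\P^2_K)$ normalized by a loxodromic element (Blanc, Cantat--Lamy, Urech), the only way $\ker\rho$ can be infinite is that $f$ normalizes a positive-dimensional algebraic subgroup fixing $\theta^+_f$; by inspection this is either the torus $\G_m^2$ in case (1), or, in characteristic $p$, the translation subgroup $\G_a(K)\times\G_a(K)\subset\Aut_F(\A^2_K)$ in case~(2).

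The last step shows that the image of $\rho$ is discrete. For any loxodromic $h\in\Stab(\theta^+_f)$ one has the alternative: either $\theta^-_h=\theta^-_f$, in which case $h$ and $f$ share both boundary fixed points and act by commuting translations on their common axis; or $\theta^-_h\ne\theta^-_f$, in which case a ping-pong argument on disjoint horoballs at the four distinct points $\theta^{\pm}_f,\theta^{\pm}_h$ produces free subgroups whose translation lengths combine to force $t_h$ to be bounded away from $1$. In the first alternative the translation lengths are logarithms of algebraic numbers appearing as eigenvalues of pullbacks on Picard groups of models, hence they form a discrete subgroup of $\R$ containing $\log\lambda(f)$. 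Consequently $\rho(\Stab(\theta^+_f))$ is discrete in $\R_{>0}$ and contains $\lambda(f)$, which yields the finite-index statement.

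The main obstacle is the second step: elliptic subgroups of $\Bir(\P^2_K)$ are notoriously subtle, because elliptic elements need not be simultaneously regularizable on a single model. Ruling out pathological infinite elliptic subgroups in $\ker\rho$ is exactly what requires the full strength of Urech's analysis in \cite{urechSubgroupsEllipticElements2021}; without it, a priori exotic subgroups fixing $\theta^+_f$ could obstruct the finite-index conclusion, even when neither~(1) nor~(2) holds.
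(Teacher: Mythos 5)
The overall framework you set up---a scaling homomorphism $\rho : \Stab(\theta^+_f) \to \R_{>0}$ whose kernel is an elliptic subgroup and whose image is an abelian group, with the hard case of an infinite elliptic kernel referred to Urech's analysis---is indeed the structure behind the cited reference. The paper's proof is a citation to Urech's Lemma 7.3, which rests on Theorem 7.1 (the short exact sequence $0 \to H \to N \to A \to 0$ with $H$ infinite elliptic forcing $N$ to be conjugate into $\Aut(\G_m^2)$, respectively $\Aut_F(\A^2_K)$ in characteristic $p$). Your step 2 therefore matches the source, and candidly acknowledging that this is exactly the content of Urech's work is appropriate. Your step 1 is mostly sound, although the conclusion for parabolic elements is off: if $h$ is parabolic and fixes $\theta^+_f$ then $\theta^+_f$ is $h$'s unique boundary fixed point, hence the class of a pencil, and then $f^*$ would scale a fibration class by $\lambda(f)$---this contradicts $f$ being loxodromic in \emph{every} case, not merely outside the toric case. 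So parabolics are simply never present in $\Stab(\theta^+_f)$; there is no leakage into case (1) from this direction.

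The genuine gap is your step 3, the discreteness of $\rho(\Stab(\theta^+_f))$ in $\R_{>0}$. Neither alternative you offer yields it. In the first alternative, the observation that translation lengths are logarithms of algebraic numbers arising as eigenvalues of pullbacks on Picard lattices does not imply discreteness: algebraic integers can accumulate at $1$ (for instance Salem numbers do), so ``algebraic'' on its own gives nothing. In the second alternative, ping-pong on disjoint horoballs at $\theta^{\pm}_f,\theta^{\pm}_h$ produces a free subgroup $\langle f^{N}, h^{M}\rangle$ for suitable $N,M$, but free generation places no lower bound whatsoever on the translation length of $h$; a loxodromic $h$ with $\lambda(h)$ arbitrarily close to $1$ and axis far from that of $f$ still plays ping-pong perfectly well, and there is no discreteness hypothesis on $\Bir(\P^2)$ acting on $\HH^\infty_X$ that would let a Margulis-type argument rescue this. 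The correct input here is the spectral gap theorem for dynamical degrees of plane birational maps (Blanc--Cantat, cited in the paper's Remark~\ref{rmq:dynamical-degrees-discrete-subgroup}): there is a universal constant $c>1$ with $\lambda(h) \geq c$ for every loxodromic $h \in \Bir(\P^2)$. Since $\rho(h) \neq 1$ forces $h$ loxodromic with $t_h = \lambda(h)^{\pm 1}$, this gap immediately makes $\rho(\Stab(\theta^+_f))$ a discrete, hence cyclic, subgroup of $\R_{>0}$, and finiteness of index follows. This arithmetic theorem, which rests on the well-ordering of the set of Pisot/Salem numbers, cannot be replaced by a geometric ping-pong argument, so the proof as you have written it does not close.
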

\begin{proof}
  This is exactly the content of Lemma 7.3 of \cite{urechSubgroupsEllipticElements2021} in the characteristic zero case
  which is based on Theorem 7.1 of \emph{loc.cit} that states the following:

  {If $0 \rightarrow H \rightarrow N
  \rightarrow A \rightarrow 0$ is an exact sequence of $\Bir (\P^2_\C)$ such that $N$ contains a loxodromic element and
$H$ is an infinite subgroup of elliptic elements, then $N$ in conjugated to a subgroup of $\Aut (\G_m^2)$}.

In positive characteristic, this result also holds but $H$ can also be conjugated to $\Aut_F (\A^2_K)$. The rest of the
proof is unchanged, see \cite{cantatCremonaGroup2018} Example 7.2, Theorem 3.3 and Remark 7.4.
\end{proof}

\begin{rmq}\label{rmq:dynamical-degrees-discrete-subgroup}
  For any $g \in \Stab (\theta^+_f)$ we must have that $t_g$ or $t_g^{-1}$ is the dynamical degree of $g$. We have a
  group homomorphism $g \in \Stab (\theta^+_f) \mapsto \log t_g \in \R$ and its image must be a discrete subgroup of
  $\R$ because of the spectral gap property of the dynamical degrees of elements of $\Bir (\P^2)$ (see
  \cite{blancDynamicalDegreesBirational2013}). In particular, there exists $a,b \in \Z \setminus \left\{ 0
  \right\}$ such that $\lambda (f)^a = \lambda (g)^b$.
\end{rmq}

\subsection{For an affine surface}\label{subsec:for-affine-surface}
Now, Let $X_0$ is a normal rational affine surface over a field $K$ and let $X$ be a completion of $X_0$. The complement
$X \setminus X_0$ is a finite union of irreducible curves. We write $\DivInf (X)_\A$ for the set of $\A$-divisors
supported at infinity in $X$ with $\A = \Z, \Q,\R$. If $X_0$ is rational and $K [X_0]^\times = K^\times$, then we have
the injective group homomorphism
\begin{equation}
  \DivInf(X)_\A \hookrightarrow \NS (X)_\A.
  \label{eq:injective-group-homorphism}
\end{equation}
Indeed, it suffices to prove it for $\A = \Z$. We have $\DivInf (X) \hookrightarrow \Pic(X)$ because there is no
nonconstant invertible function over $X_0$ and then $\Pic (X) = \NS (X)$ because $X$ is rational. We define the
following spaces
\begin{equation}
  \cC (X_0) = \varinjlim_{Y} \NS (Y)_\R, \quad \cW (X_0) = \varprojlim_Y \NS (Y)_\R
  \label{eq:<+label+>}
\end{equation}
where the limits are over every completion $Y$ of $X_0$. If $X$ is a fixed completion of $X_0$ we have a canonical
surjective group homomorphisms $\cC (X) \twoheadrightarrow \cC(X_0)$ and $\cW(X) \twoheadrightarrow \cW (X_0)$.
We define the Picard-Manin space $\overline \cC (X_0)$ and the
hyperbolic space $\HH_{X_0}^\infty$ in the same fashion as for the projective surfaces. The group $\Aut (X_0)$ acts by
isometries over $\HH_{X_0}^\infty$ and Theorem \ref{thm:cantat-hyperbolic} also holds in this setting.

Finally, we introduce
\begin{equation}
  \Cinf = \varinjlim_Y \DivInf(Y)_\R, \quad \Winf = \varprojlim_Y \DivInf (Y)_\R
  \label{eq:<+label+>}
\end{equation}
and we have the following commutative diagram
\begin{equation}
  \begin{tikzcd}
    \Cinf \ar[r, hook] \ar[d, hook] & \cC (X_0) \ar[d, hook] \\
    \Winf \ar[r, hook] & \cW (X_0)
  \end{tikzcd}
  \label{eq:<+label+>}
\end{equation}
where the horizontal arrows are injective by \eqref{eq:injective-group-homorphism}.

\subsection{Valuations and divisors}\label{subsec:valuation-divisors}
Let $X_0$ be a normal affine surface over a field $K$ and denote by $A$ its ring of regular functions. A
\emph{valuation} over $A$ is a function $v : A \rightarrow \R \cup \left\{ +\infty \right\}$ such that
\begin{enumerate}
  \item $\forall P,Q \in K[X_0], \quad v(PQ) = v(P) + v(Q), \quad v(P+Q) \geq \min (v(P), v(Q))$.
  \item $v(0) = + \infty$.
  \item $v_{|K^\times} = 0$.
\end{enumerate}
Any automorphism $f \in \Aut (X_0)$ acts by pushforward
\begin{equation}
  f_* v (P) = v (f^* P).
  \label{eq:<+label+>}
\end{equation}
If $X$ is a completion of $X_0$, by the valuative criterion of properness there exists a unique (scheme) point $p \in X$
such that $v_{|\OO_{X,p}} \geq 0$ and $v_{|\m_{X,p}} > 0$, we call this point the \emph{center} of $v$ in $X$ and write
it $c_X (v)$. If $\pi : Y \rightarrow X$ is another completion above $X$, then
\begin{equation}
  c_Y (v) \in \pi^{-1} (c_X (p)), \quad \pi (c_Y (v)) = c_X (v).
  \label{eq:<+label+>}
\end{equation}
Two valuations $v,w$ are proportional if and only if for any completion $X$ we have $c_X (v) = c_X (w)$ and it suffices
to check the equality for a cofinal set of completion $X$.

We write $\cV$ for the space of valuations over $A$ and $\cV_\infty$ for the set of valuations centered at infinity.
\begin{ex}\label{ex:valuations}
  We give two examples of valuations. If $X$ is a completion of $X_0$ and $E$ is an irreducible curve in $X$, then the
  local ring at the generic point $\eta_E$ of $E$ is a discrete valuation ring with valuation $\ord_E$, the order of vanishing
  along $E$. This induces a valuation over $K[X_0]$ and $c_X (\ord_E) = \eta_E$. The valuation $\ord_E$ is centered at
  infinity if and only if $E$ is one of the irreducible component of $X \setminus X_0$. Any valuation proportional to
  some $\ord_E$ is called \emph{divisorial}.

  Let $p \in X (K)$ be a closed point and assume that $X$ is regular at $p$. Let $(x,y)$ be local coordinates at $p$,
  then for any $\alpha, \beta >0$ we define the valuation $v_{\alpha, \beta}$ at the local ring of $p$ by
  \begin{equation}
    v_{\alpha,\beta} \left( \sum_{i,j}a_{ij} x^i y^j \right) = \min \left( \alpha i + \beta j : a_{ij} \neq 0 \right).
    \label{eq:<+label+>}
  \end{equation}
This induces a valuation over $K\left[ X_0 \right]$ because any regular function can be expressed as a quotient of
germs of regular functions at $p$. We have $c_X (v_{\alpha, \beta}) = p$ and it is centered at infinity if and only if
$p \not \in X_0 (K)$. These valuations are called \emph{monomial} valuations.
\end{ex}

In \cite{abboudDynamicsEndomorphismsAffine2023}, we showed that every $v \in \cV_\infty$ induces a Weil
divisor $Z_v \in \Winf$ with the property that
\begin{equation}
  f_* Z_v = Z_{f_* v}.
  \label{eq:<+label+>}
\end{equation}
And we have the following
\begin{thm}[\cite{abboudDynamicsEndomorphismsAffine2023}, Theorem 11.16 and Theorem 14.4]\label{thm:nord-sud}
  If $f$ is a loxodromic automorphism with two fixed points $\theta^+_f, \theta^-_f \in \partial H_{X_0}^\infty$, then
  $\theta^+_f, \theta^-_f \in \overline \cC (X_0) \cap \Winf$ and these two divisors correspond to two eigenvaluations $v_+,
  v_- \in \cV_\infty$ such that
  \begin{equation}
    Z_{v_{+}} = \theta^-_f, \quad Z_{v_-} = \theta^+_f
    \label{eq:<+label+>}
  \end{equation}
  and for any completion $X$ of $X_0$, $c_X (v_\pm)$ is a closed point.
\end{thm}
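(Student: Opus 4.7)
The plan is to use the Picard--Manin/valuation dictionary developed in \cite{abboudDynamicsEndomorphismsAffine2023} together with the fact that $f$ is an automorphism of $X_0$. Since $f$ restricts to an automorphism of $X_0$, the pullback $f^*$ on the Picard--Manin space preserves the subspace $\Winf \subset \cW(X_0)$. Fix a completion $X$ and write the reduced boundary divisor $D = \sum E_i \in \DivInf(X)_\R$; the sequence $\lambda(f)^{-n} (f^n)^* D \in \Winf$ has bounded projection to each $\NS(Y)_\R$ (finite-dimensional linear algebra on an $f^*$-invariant subspace), and every accumulation point is a nonzero $f^*$-eigenclass in $\Winf$ with eigenvalue $\lambda(f)$, isotropic because its self-intersection vanishes in the limit. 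By the uniqueness of the expanding boundary eigenvector in Theorem \ref{thm:cantat-hyperbolic}, any such limit must be $\theta^+_f$ up to a positive scalar, so $\theta^+_f \in \overline{\cC}(X_0) \cap \Winf$. The class $\theta^-_f$ is treated symmetrically using $f^{-1}$.

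Next, I would associate the valuations to the boundary classes. The correspondence $v \mapsto Z_v$ constructed in \cite{abboudDynamicsEndomorphismsAffine2023} sends $\cV_\infty$ into $\Winf$ equivariantly, namely $Z_{f_* v} = f_* Z_v$. The plan is to show that the specific isotropic classes $\theta^\pm_f$ lie in the image of this map, producing valuations $v_\pm \in \cV_\infty$ satisfying $Z_{v_+} = \theta^-_f$ and $Z_{v_-} = \theta^+_f$; the swap between $+$ and $-$ reflects the duality between $f_*$ on valuations and $f^*$ on divisors. The eigenrelation from Theorem \ref{thm:cantat-hyperbolic} then translates into $f_* v_\pm = \lambda(f)^{\mp 1} v_\pm$, exhibiting $v_\pm$ as the two eigenvaluations of $f_*$ acting on $\cV_\infty$.

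Finally, I would prove that $c_X(v_\pm)$ is a closed point on every completion by contradiction. If $c_X(v_+)$ is the generic point of an irreducible boundary curve $C \subset X \setminus X_0$, then $v_+$ is proportional to $\ord_C$ and $\theta^-_f = Z_{v_+}$ is a positive real multiple of the class $[C] \in \NS(X)_\R$. Since $\theta^-_f$ is isotropic one gets $C^2 = 0$, so after possibly passing to a higher model the linear system of $C$ defines a fibration $X \dashrightarrow B$ that $f$ must preserve, since $f^* \theta^-_f$ remains proportional to $\theta^-_f$. Preserving a fibration forces $f$ to be parabolic by Theorem \ref{thm:cantat-hyperbolic}, contradicting $\lambda(f) > 1$. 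The hard part is really the first step: extracting from the abstract boundary eigenclass $\theta^+_f$ an honest valuation on $K[X_0]$ in an $f$-equivariant way, and this is the content of the structural results of \cite{abboudDynamicsEndomorphismsAffine2023} that one ultimately invokes.
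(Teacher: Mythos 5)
The present paper does not prove this theorem; it is quoted as Theorems 11.16 and 14.4 of \cite{abboudDynamicsEndomorphismsAffine2023} and used as a black box, so there is no internal proof to compare against, and I can only assess your sketch on its own terms.

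Structurally your plan is sensible: because $f$ is an automorphism of $X_0$, the classes $\lambda(f)^{-n}(f^n)^*D$ for an ample boundary divisor $D$ stay in $\Winf$, and after the standard convergence argument in $\overline{\cC}(X_0)$ their limit is a positive multiple of $\theta^+_f$; the equivariance $Z_{f_*v} = f_*Z_v$ together with the $f^*/f_*$ duality then explains the swap of labels between $\theta^\pm_f$ and $v_\mp$. But as written this does not amount to a proof. You acknowledge yourself that the key step --- that the isotropic classes $\theta^\pm_f \in \Winf$ actually lie in the image of $v \mapsto Z_v$, so that they come from genuine valuations $v_\mp \in \cV_\infty$ --- is ``the hard part\ldots that one ultimately invokes''; that step \emph{is} Theorem 11.16 of the cited work, and nothing in your reduction chain produces those valuations. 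Your closed-point argument also has a gap: a boundary curve $C$ with $C^2 = 0$ need not move in a pencil (one would need $h^0(\OO_X(C)) \geq 2$, which requires additional input such as rationality of $C$ and adjunction), and the inference ``preserves a fibration $\Rightarrow$ parabolic'' is the converse of what Theorem~\ref{thm:cantat-hyperbolic}(2) actually asserts, so it needs its own justification. A cleaner route, which bypasses fibrations entirely, is available once the eigenrelation is in place: if $v_\pm$ were proportional to $\ord_C$ for an irreducible boundary curve $C$ on some completion $X$, then since $f$ is defined at the generic point of $C$ (its indeterminacy locus has codimension $2$), the valuation $f_*\ord_C$ is either $\ord_{f(C)}$ when $f$ does not contract $C$, or else is centered at the closed point $f(C)$; in neither case can it equal $\lambda(f)^{\pm 1}\ord_C$ with $\lambda(f) \neq 1$, because a divisorial valuation normalized on a uniformizer is never a nontrivial scalar multiple of another one. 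This contradicts the eigenrelation directly and forces $c_X(v_\pm)$ to be a closed point on every completion.
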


\begin{cor}\label{cor:same-eigenvaluations}
  If two loxodromic automorphisms of an affine surface $X_0$ have the same eigenvaluation $v_-$ then they must share a
  common iterate unless
  \begin{enumerate}
    \item $X_0 \simeq \G_m^2$.
    \item $\car K > 0, X_0 \simeq \A^2_K$ and $f,g \in \Aut_F(\A^2_K)$.
  \end{enumerate}
\end{cor}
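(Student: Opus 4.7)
The plan is to translate the hypothesis on eigenvaluations into one on boundary fixed points of the hyperbolic space $\HH_{X_0}^\infty$, and then invoke Urech's description of boundary stabilizers in $\Bir(\P^2)$ to conclude.

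First I would appeal to Theorem \ref{thm:nord-sud}: the eigenvaluation $v_-$ of a loxodromic automorphism corresponds under $v \mapsto Z_v$ to its attracting boundary fixed point, namely $Z_{v_-} = \theta^+ \in \partial \HH_{X_0}^\infty$. Since this correspondence is injective up to positive scalar, the hypothesis $v_-^f = v_-^g$ forces $\theta^+_f = \theta^+_g$; call this common point $\theta^+$. Because $g^* \theta^+ = \lambda(g)^{-1} \theta^+$ with $\lambda(g) > 1$, we have $g \in \Stab(\theta^+_f) \subset \Bir(\P^2)$, using the embedding $\Aut(X_0) \hookrightarrow \Bir(\P^2)$ furnished by Proposition \ref{prop:charc-alg-torus}.

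Next I would apply Proposition \ref{prop:stab-theta}. In the generic case, $\langle f \rangle$ has finite index in $\Stab(\theta^+_f)$, so some positive power $g^N$ lies in $\langle f \rangle$: this yields $g^N = f^M$ with $M \neq 0$, since $\lambda(g)^N > 1$ whereas $\lambda(\id) = 1$. Otherwise $f$ is conjugated in $\Bir(\P^2)$ either to an automorphism of $\G_m^2$ or (in positive characteristic) to an element of $\Aut_F(\A^2_K)$.

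The main obstacle will be to bridge these exceptional cases, phrased via conjugacy in $\Bir(\P^2)$, with the intrinsic exclusions of the corollary (stated in terms of the isomorphism class of $X_0$ and membership of $f,g$ in $\Aut_F$). For the torus case I would invoke the dichotomy of Proposition \ref{prop:charc-alg-torus}: if $X_0 \not\simeq \G_m^2$ then $K[X_0]^\times = K^\times$, which is incompatible with the large unit group carried across by a birational conjugacy to $\Aut(\G_m^2)$ compatible with the affine structure; hence $X_0 \simeq \G_m^2$ in that exceptional case. For the positive-characteristic case, the same conjugacy identifies the affine structure of $X_0$ with that of $\A^2_K$, and since $g \in \Stab(\theta^+_f)$ is governed by the very same normaliser of $\G_a \times \G_a$, applying the structural argument to $g$ as well yields $f, g \in \Aut_F(\A^2_K)$, matching the stated exception.
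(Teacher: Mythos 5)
Your core argument coincides exactly with the paper's one-line proof: use Theorem \ref{thm:nord-sud} to turn $v_-^f = v_-^g$ into $\theta^+_f = \theta^+_g$, deduce $g \in \Stab(\theta^+_f)$, and invoke Proposition \ref{prop:stab-theta}; in the non-exceptional case $\langle f\rangle$ has finite index in $\Stab(\theta^+_f)$ and $g^N = f^M$ with $M \neq 0$ since $\lambda(g)^N > 1$. (Small slip: by Theorem \ref{thm:cantat-hyperbolic} one has $g^*\theta^+ = \lambda(g)\theta^+$, not $\lambda(g)^{-1}\theta^+$, but this does not affect the membership $g \in \Stab(\theta^+_f)$.)

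You go further than the paper in noticing a mismatch between the exceptions of Proposition \ref{prop:stab-theta} --- conjugacy \emph{in $\Bir(\P^2)$} to a torus automorphism, or to a Frobenius-type automorphism of $\A^2_K$ --- and the corollary's exceptions, stated intrinsically in terms of the isomorphism type of $X_0$ and membership of $f,g$ in $\Aut_F(\A^2_K)$. The paper's proof is silent on this; it stops at Proposition \ref{prop:stab-theta}, and the only downstream use, Corollary \ref{cor:local}, avoids the issue by assuming $X_0 \not\simeq \G_m^2$ outright while keeping the $\Aut_F$ exception phrased as a $\Bir(\P^2)$-conjugacy. Your proposed bridge, however, does not hold up: a conjugating element $\phi \in \Bir(\P^2)$ is merely birational and need not restrict to an isomorphism of $X_0$ onto $\G_m^2$ (or onto $\A^2_K$), so there is no transported coordinate ring, unit group, or affine structure on which to base the comparison with Proposition \ref{prop:charc-alg-torus}. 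If the corollary's intrinsic phrasing is to be taken literally, this passage requires a genuine argument that neither the paper gives nor your unit-group heuristic supplies; otherwise the exceptions should simply be read as the conjugacy conditions of Proposition \ref{prop:stab-theta}, which is all the paper actually uses.
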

\begin{proof}
  Since $Z_{v_-} = \theta^+$, this follows directly from Proposition \ref{prop:stab-theta}.
\end{proof}

\subsection{Dynamics of a loxodromic automorphism}\label{subsec:dyn-loxo-auto}
Finally we have this result on the dynamics of a loxodromic automorphism.
\begin{thm}[\cite{abboudDynamicsEndomorphismsAffine2023}, Theorem 14.4]\label{thm:dynamics-loxodromic-automorphisms}
  Let $X_0$ be a normal affine surface over a field $K$ and $f \in \Aut (X_0)$ a loxodromic automorphism, then there
  exists a completion $X$ of $X_0$ and closed points $p_+, p_- \in X (K)$ such that
  \begin{enumerate}
    \item $p_+ \neq p_-.$
    \item $f^{\pm 1}$ is defined at $p_\pm$ and $f^{\pm} (p_\pm) = p_\pm$.
    \item There exists $N_0$ such that $\forall N \geq N_0, f^{\pm N}$ contracts $X \setminus X_0$ to $p_\pm$.
    \item \label{item:local-normal-forms} There exists local coordinates $(u,v)$ at $p_+$ such that
      \begin{equation}
        f(u,v) = \left(\alpha(u,v) u^a v^b, \beta(u,v) u^c v^d\right) \text{ or } f(u,v) = \left(\phi (u,v) u^a, u^b v
          \psi_1 (u,v) + \psi_2 (u)\right)
        \label{eq:<+label+>}
      \end{equation}
      with $\alpha, \beta$ invertible and $a,b,c,d \geq 1$ or $\phi$ is invertible, $\psi_1 (0, v) \neq 0$ and $\psi_2
      (0) \neq 0$, $a \geq 2$ and $b \geq 1$.

      In the first case, $uv = 0$ is a local equation of $X \setminus X_0$ at
      $p_+$ and in the second case, $u = 0$ is a local equation of $X \setminus X_0$ at $p_+$.
      The analogue statement holds for $p_-$ and $f^{-1}$.
    \item In particular, If $K \hookrightarrow K_v$ is an embedding into a complete field, then there exists a basis of
      small open neighbourhood $U^\pm$ of $p_\pm$ in $X (K_v)$ for the Euclidian topology such that $f^{\pm} (U^\pm)
      \Subset U^\pm$ and for every $x \in U^\pm, f^{\pm k} (x) \xrightarrow[k \rightarrow +\infty]{} p_\pm$.
    \item $p_\pm = c_X (v_\mp)$.
  \end{enumerate}
  Furthermore, any completion obtained by blowing up $X$ at infinity satisfies the same properties.
\end{thm}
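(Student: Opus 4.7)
The plan is to build $X$, $p_+$, $p_-$ directly from the two eigenvaluations $v_+,v_-\in\cV_\infty$ furnished by Theorem \ref{thm:nord-sud}, which satisfy $f_*v_\pm=\lambda(f)^{\pm 1}v_\pm$ and $Z_{v_+}=\theta_f^-$, $Z_{v_-}=\theta_f^+$, and whose center on any completion is already a closed point. Start from any completion $Y_0$ of $X_0$ and successively blow up the common center whenever $c_Y(v_+)=c_Y(v_-)$; since $v_+\neq v_-$ this terminates, producing a completion $X$ on which the centers differ. Set $p_+:=c_X(v_-)$ and $p_-:=c_X(v_+)$; this gives (1) and (6) by construction. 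For (2), observe that $f_*v_-$ and $v_-$ are proportional, hence have the same center on $X$, so $f(p_+)=p_+$, and $f$ is regular at $p_+$ because it will be an attracting point for the forward dynamics (this becomes clear from (4)); the statement at $p_-$ is symmetric.

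Item (3) is read off the loxodromic dynamics on the Picard-Manin space. By Theorem \ref{thm:cantat-hyperbolic}, for any irreducible component $E$ of $X\setminus X_0$ the normalised iterate $\lambda(f)^{-N}(f^N)_*[E]$ converges in $\overline{\cC}(X_0)$ to a positive multiple of $\theta_f^+=Z_{v_-}$, using that $[E]\in\Winf$ pairs nontrivially with $\theta_f^-$. The Weil divisor $Z_{v_-}$ is concentrated at $c_X(v_-)=p_+$, so the support of $(f^N)_*[E]$ collapses to $p_+$ once $N$ is large enough, uniformly in the finitely many components $E$. The inverse statement for $p_-$ is obtained by applying the same argument to $f^{-1}$.

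The heart of the argument is item (4), the local normal form. Here one invokes the classification of $v_-\in\cV_\infty$ centered at the closed point $p_+$ developed in \cite{abboudDynamicsEndomorphismsAffine2023}: the eigenvaluation is necessarily either \emph{divisorial} (a positive multiple of $\ord_E$ for some prime at infinity through $p_+$) or \emph{monomial} along two smooth branches of $X\setminus X_0$ crossing transversally at $p_+$; after possibly blowing up further at $p_+$, we may arrange that these branches are the only local components of $X\setminus X_0$. In the monomial case, pick local coordinates $(u,v)$ at $p_+$ with $uv=0$ a local equation of the boundary and $v_-=a\,\ord_u+b\,\ord_v$; writing $f=(P,Q)$ in the completed local ring and using $f_*v_-=\lambda(f)v_-$ together with invariance of the boundary, one forces $P,Q$ to be monomials in $(u,v)$ up to invertible factors, yielding the first normal form with positive integer exponents. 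In the divisorial case only $u=0$ belongs to the boundary; the transverse $v$-direction need not map into the boundary, which produces the semi-monomial normal form $(u,v)\mapsto(\phi u^a,u^b v\psi_1+\psi_2(u))$, and the constraints $\psi_2(0)\neq 0$, $a\geq 2$, $b\geq 1$ come from preservation of $v_-$ together with the strict inequality $\lambda(f)>1$.

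Item (5) is a direct corollary of (4): both normal forms contract towards $p_+$ in suitable logarithmic coordinates, so for any embedding $K\hookrightarrow K_v$ into a complete field, a sufficiently small polydisc $U^+$ satisfies $f(U^+)\Subset U^+$ and $f^k(x)\to p_+$ for every $x\in U^+$; the analogue at $p_-$ is obtained by symmetry. The last clause is immediate: blowing up $X$ at a closed point in $X\setminus X_0$ only refines the local model, $c(v_\mp)$ remains a closed point, and the normal-form analysis transports through. The principal obstacle is item (4); the remainder distils down to formal consequences of Theorems \ref{thm:nord-sud} and \ref{thm:cantat-hyperbolic}, whereas (4) requires the precise valuation-theoretic structure of $v_\pm$ near their centers, which is the technical core of \cite{abboudDynamicsEndomorphismsAffine2023}.
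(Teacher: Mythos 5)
First, note that the paper does not actually prove this statement: it is imported verbatim as Theorem 14.4 (together with Theorem 11.16) of \cite{abboudDynamicsEndomorphismsAffine2023}, so there is no internal proof to compare against, and your proposal is an attempt to reconstruct the external argument from the ingredients restated in Section 3. As such it has genuine gaps. The most serious is in item (3): convergence of the normalised classes $\lambda(f)^{-N}(f^N)_*[E]$ to a multiple of $\theta_f^+$ in the completed Picard--Manin space does not imply that the curves $f^N(E)$ are geometrically contracted to $p_+$; convergence in $\overline{\cC}(X_0)$ is a purely numerical statement and does not control the support of $(f^N)_*[E]$. Moreover the nonvanishing $[E]\cdot\theta_f^-\neq 0$, which you need even for the numerical claim, is asserted without justification and can fail for an isotropic nef class. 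The correct route (the one mirrored in this paper by the proof of Theorem \ref{thm:dynamics-infinity-alg-torus} for the torus) is to pass to an algebraically stable completion, observe that a boundary component is either eventually contracted or eventually periodic among the finitely many boundary components, and then exclude periodic components using the local normal form, since a periodic component forces an eigenvalue constraint incompatible with $\lambda(f)>1$.

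Second, your treatment of item (2) is circular: you deduce $f(p_+)=p_+$ from the identity $c_X(f_*v_-)=f(c_X(v_-))$, but that identity already presupposes that $f$ is defined at $c_X(v_-)$, which you then justify by appealing to the normal form of item (4) --- itself written at a point where $f$ is assumed regular. One must first show, for instance by blowing up until $f$ is algebraically stable and the centers of $v_\pm$ avoid $\Ind(f^{\pm 1})$, that $p_+\notin\Ind(f)$. Finally, item (4), which you correctly identify as the technical core, is described rather than proved: the dichotomy between a divisorial and a quasimonomial eigenvaluation, and the derivation of the two normal forms from $f_*v_-=\lambda(f)v_-$, are precisely the content of the cited Theorem 14.4, so for the hardest part your argument reduces to invoking the result being proved. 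Items (5) and the stability under further blow-ups are fine as formal consequences of (4).
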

A completion that satisfies Theorem \ref{thm:dynamics-loxodromic-automorphisms} will be called a \emph{dynamical
completion} of $f$.

\begin{lemme}\label{lemme:convergence-vers-p}
  Let $f$ be a loxodromic automorphism of $X_0$ and $X$ be a dynamical completion of $f$. If there exists an absolute
  value $v \in \cM(K)$ and $x \in X_0(K_v)$ such that the forward $f$-orbit of $x$ is unbounded, then $f^n (x)
  \xrightarrow[n \rightarrow + \infty]{} p_+$.
\end{lemme}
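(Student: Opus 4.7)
The plan is to combine the compactness of $X(K_v)$ with items (3) and (5) of Theorem \ref{thm:dynamics-loxodromic-automorphisms}. Since $X$ is projective, $X(K_v)$ is compact, and the hypothesis that the orbit $(f^n(x))_{n\geq 0}$ is unbounded in $X_0(K_v)$ forces it to leave every compact subset of $X_0(K_v)$. Extracting a convergent subsequence in $X(K_v)$ then yields an accumulation point $q\in(X\setminus X_0)(K_v)$.

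By item (5), $p_+$ admits a fundamental system of open neighborhoods $(U_j^+)_j$ in $X(K_v)$ with $f(U_j^+)\Subset U_j^+$ and $f^k(y)\to p_+$ for every $y\in U_j^+$. The forward invariance then implies that once the orbit enters some $U_j^+$ it remains there, and local convergence yields $f^n(x)\to p_+$. It therefore suffices to show that for every $j$ the orbit eventually enters $U_j^+$.

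Fix $N\geq N_0$ from item (3): the birational self-map $f^N\colon X\dashrightarrow X$ contracts $X\setminus X_0$ to $p_+$, and its indeterminacy locus $I(f^N)$ is a finite subset of $X\setminus X_0$. Off $I(f^N)$, $f^N$ is regular and continuous in the Euclidian topology, and maps every boundary point to $p_+$. Provided the accumulation point $q$ can be chosen outside $I(f^N)$, continuity of $f^N$ at $q$ yields
\[
  f^{n_k+N}(x) = f^N\bigl(f^{n_k}(x)\bigr)\longrightarrow f^N(q) = p_+,
\]
so $f^{n_k+N}(x)\in U_j^+$ for $k$ large, which concludes via forward invariance of $U_j^+$.

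The main obstacle is therefore to rule out the case where every accumulation point of the orbit on $X\setminus X_0$ lies in the finite set $I(f^N)$. The critical subcase $q=p_-$ is handled by a backward-iteration argument: by item (5) applied to $f^{-1}$, there is a basis neighborhood $U^-$ of $p_-$ with $f^{-1}(U^-)\Subset U^-$. If $f^{n_k}(x)\to p_-$, then $f^{n_k}(x)\in U^-$ for $k$ large, and since $f^{-1}$ is a morphism on $X_0$ with $f^{-1}(U^-)\subset U^-$, backward iteration yields $f^{n_k-i}(x)\in U^-$ for every $0\leq i\leq n_k$; taking $i=n_k$ forces $x\in U^-$ for every such basis neighborhood, hence $x=p_-$, contradicting $x\in X_0(K_v)$. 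On a minimal dynamical completion one has $I(f^N)=\{p_-\}$ and this suffices; on a general dynamical completion the additional indeterminacy points lie on iterated $f$-preimages of $p_-$ at infinity, to which the same backward-iteration argument applies.
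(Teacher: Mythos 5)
Your argument follows essentially the same route as the paper's proof: compactness of $X(K_v)$ produces an accumulation point $q$ at infinity; if $q$ is a point where some high iterate $f^N$ is defined and sends it to $p_+$, continuity pushes the orbit into a forward-invariant basin of $p_+$; and the case $q=p_-$ is ruled out by the same backward-iteration argument the paper gives. Where you go beyond the paper is in explicitly worrying that $q$ might be an indeterminacy point of $f^N$ other than $p_-$; the paper does not address this because on a dynamical completion the only point at infinity where $f^N$ fails to be defined is $p_-$ (this is implicit in items (2) and (3) of Theorem \ref{thm:dynamics-loxodromic-automorphisms} and in the construction from \cite{abboudDynamicsEndomorphismsAffine2023}: $f^{-N}$ collapses all of $X\setminus X_0$ to $p_-$, so $\Ind(f^N)$ at infinity reduces to $p_-$). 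Your final sentence is the one weak point: the claim that additional indeterminacy points are ``iterated $f$-preimages of $p_-$'' handled ``by the same backward-iteration argument'' is a hand-wave, since the $f^{-1}$-invariant shrinking neighborhoods of item (5) exist only at $p_-$; a generic such point would instead need to be pushed forward by an iterate defined there to reduce to the $p_-$ case, which is precisely what requires justification. It is moot once one invokes the stated property of dynamical completions, but as written that step is not complete.
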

\begin{proof}
  We use the notations of Theorem \ref{thm:dynamics-loxodromic-automorphisms} (5). Since $X (K_v)$ is compact the
  sequence $f^n (x)$ must accumulate to a point $q \in X \setminus X_0 (K_v)$. If $q \neq
  p_-$, then since $f^{l} (q) = p_+$ for $l$ large enough we must have that there exists $n_0$ such that $f^{n_0} (x) \in
  U^+$ and therefore $f^n (x) \rightarrow p_+$.

  Otherwise we must have $f^n (x) \rightarrow p_-$ but this is not possible since we would have that for every open
  small neighbourhood $U^-$ of $p_-$ in $X(K_v)$ there exists $n_0$ such that $f^{n_0} (x) \in U^-$, but since $U^-$ is
  $f^{-1}$-invariant we get $x \in U^-$ for arbitrary small open neighbourhood $U^-$ of $p_-$, this is absurd.
\end{proof}

\begin{cor}\label{cor:local}
  Let $K_v$ be a complete field and $X_0$ a normal affine surface over $K_v$. Suppose that there exists $p,q \in X_0
  (K_v)$ such that
  \begin{enumerate}
    \item $\OO_{f,+} (p) \cap \OO_{g,+} (q)$ is infinite.
    \item $\OO_{f,+} (p)$ is unbounded in $X_0 (K_v)$, meaning its closure is not compact.
    \item $X_0 \not \simeq \G_m^2$.
  \end{enumerate}
  Then, $f,g$ have the same eigenvaluation $v_-$. Furthermore, if $\langle f,g \rangle$ is not conjugated to a subgroup
  of $\Aut_F (\A^2_K)$ in $\Bir (\P^2_K)$, then there exists $N,M \neq 0$ such that
  \begin{equation}
    f^N = g^M.
    \label{eq:<+label+>}
  \end{equation}
\end{cor}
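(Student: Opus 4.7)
The plan is to work on a common \emph{dynamical completion} $X$ of $f$ and $g$ and to use Lemma \ref{lemme:convergence-vers-p} to identify their attractive fixed points at infinity from the infinite intersection. Such an $X$ exists: by the last sentence of Theorem \ref{thm:dynamics-loxodromic-automorphisms}, any completion dominating a dynamical one remains dynamical, so starting from dynamical completions for $f$ and $g$ given by that theorem and passing to a normal completion dominating both yields the desired $X$. Denote by $p_+^f$ and $p_+^g$ the attractive fixed points of $f$ and $g$ on $X$.

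First I apply Lemma \ref{lemme:convergence-vers-p} to $\OO_{f,+}(p)$, which is unbounded by hypothesis~(2): this gives $f^n(p) \to p_+^f$ in $X(K_v)$. From the infinite intersection in hypothesis~(1) I extract indices satisfying $f^{n_k}(p) = g^{m_k}(q)$; since $p$ is not $f$-periodic (otherwise $\OO_{f,+}(p)$ would be finite, contradicting hypothesis~(2)) and $q$ is not $g$-periodic (otherwise the intersection would be finite), one can arrange $n_k, m_k \to +\infty$. The common sequence converges to $p_+^f \in X \setminus X_0$, so $\OO_{g,+}(q)$ is itself unbounded in $X_0(K_v)$, and Lemma \ref{lemme:convergence-vers-p} applied to $g$ yields $g^{m_k}(q) \to p_+^g$. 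Equating the two limits gives $p_+^f = p_+^g$.

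By Theorem \ref{thm:nord-sud}, this reads $c_X(v_-^f) = c_X(v_-^g)$. The same argument applies verbatim on any further blow-up $X' \to X$ at infinity: $X'$ remains dynamical for both $f$ and $g$, the intersecting sequence still lives in $X_0(K_v) \subset X'(K_v)$, and its limit forces the attractive fixed points of $f$ and $g$ on $X'$ to coincide. Since blow-ups at infinity form a cofinal family of completions, the centers of $v_-^f$ and $v_-^g$ coincide cofinally, so by the criterion recalled in \S\ref{subsec:valuation-divisors} the two valuations are proportional; in other words $f$ and $g$ share the eigenvaluation $v_-$. Under the extra hypothesis that $\langle f, g\rangle$ is not conjugate in $\Bir(\P^2_K)$ to a subgroup of $\Aut_F(\A^2_K)$, Corollary \ref{cor:same-eigenvaluations} then produces $N, M \neq 0$ with $f^N = g^M$. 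The main subtlety is this last step: one must verify that the equality of fixed points is stable under blow-ups, which is what upgrades equality of centers on one $X$ to equality of the valuations themselves.
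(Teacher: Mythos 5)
Your proof is correct and follows the same route as the paper's: pass to a common dynamical completion, apply Lemma~\ref{lemme:convergence-vers-p} to both orbits (using the infinite intersection to see that $\OO_{g,+}(q)$ is also unbounded) so that $p_+^f = p_+^g$, upgrade this equality of centers over the cofinal family of blow-ups at infinity to $v_-^f = v_-^g$ via the criterion of \S\ref{subsec:valuation-divisors}, and invoke Corollary~\ref{cor:same-eigenvaluations}. One minor slip: the identity $p_+ = c_X(v_-)$ you use is item~(6) of Theorem~\ref{thm:dynamics-loxodromic-automorphisms} rather than Theorem~\ref{thm:nord-sud}.
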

\begin{proof}
  For any completion $X$ of $X_0$ that satisfies Theorem
\ref{thm:dynamics-loxodromic-automorphisms}, we must have by Lemma \ref{lemme:convergence-vers-p} that $p_+ = c_X
(v_- (f)) = c_X (v_- (g))$. This means by Theorem \ref{thm:dynamics-loxodromic-automorphisms} that for a cofinal set of
completions $X$, we have $c_X (v_-) (f) = c_X (v_- (g))$. Thus $v_- (f) = v_- (g)$ and we conclude by Corollary
\ref{cor:same-eigenvaluations}.
\end{proof}

\section{Proof of Theorem \ref{thm:relation-orbits}}\label{sec:proof}
We can now state Theorem \ref{thm:relation-orbits} in any characteristic.
\begin{thm}\label{thm:relation-orbits-any-char}
  Let $X_0$ be a normal affine surface over a field $K$ of any characteristic and $f,g$ be two loxodromic automorphisms
  and suppose that $\langle f,g \rangle$ is not conjugated to a subgroup of $\Aut_F (\A^2_K)$ in $\Bir (\P^2_K)$. If
  there exists $p,q \in X_0 (K)$ such that $\OO_f (p) \cap \OO_g (q)$ is infinite, then there exists $N,M \in \Z
  \setminus \left\{ 0 \right\}$ such that
\begin{equation}
  f^N = g^M.
  \label{eq:<+label+>}
\end{equation}

If $\langle f,g \rangle$ is conjugated to a subgroup of $\Aut_F (\A^2_K)$ and the set
    \begin{equation}
      \left\{ n \in \Z : \exists m \in \Z,  f^n (p) = g^m (q) \right\}
      \label{eq:<+label+>}
    \end{equation}
    is of positive Banach density, then the same conclusion holds.
\end{thm}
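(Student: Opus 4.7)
The plan is to reduce to a situation where the local convergence result Corollary \ref{cor:local} applies, handling the torus and Frobenius exceptions separately. By Lemma \ref{lemme:reductions}, after replacing $f$ or $g$ by an inverse and $p$ by a common point of the two orbits, we may assume $p = q$ and that $\OO_{f,+}(p) \cap \OO_{g,+}(p)$ is infinite (or of positive Banach density in the $\Aut_F$ case). All the data are defined over a finitely generated subfield of $K$, so we may replace $K$ by such a subfield; in positive characteristic, $\tr.deg_\F K \geq 1$ can be imposed since otherwise $X_0(K)$ is finite and the hypothesis is vacuous. This puts us in the setting of either the Weil height of \S\ref{subsec:heights} or the Moriwaki height of \S\ref{subsec:moriwaki-height}.

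Since $p$ is not $f$-periodic, fixing an embedding $X_0 \hookrightarrow \A^N_K$ the Northcott property forces the heights $h(f^n(p))$ to be unbounded. Writing $h$ as a sum of local contributions indexed by $\cM(K)$ (with an additional archimedean integral in the Moriwaki case), at least one summand must grow without bound along a subsequence, producing a place $v \in \cM(K)$ with $\OO_{f,+}(p)$ unbounded in $X_0(K_v)$. The archimedean term of the Moriwaki height is the delicate point here: we choose the arithmetic polarisation of Lemma \ref{lemme:compact-support-measure-arithmetic-polarisation} so that $\mu_\C$ is supported away from the hyperplane at infinity, which lets an unbounded integral pin down a specific $b \in \supp \mu_\C$ witnessing the unboundedness.

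If $X_0 \not\simeq \G_m^2$, the hypotheses of Corollary \ref{cor:local} are met over $K_v$, and since by assumption $\langle f,g \rangle$ is not conjugated into $\Aut_F(\A^2_K)$, that corollary directly yields $f^N = g^M$. If $X_0 \simeq \G_m^2$, we work instead on a completion such as $\P^1 \times \P^1$: Lemma \ref{lemme:convergence-vers-p} still gives $f^n(p) \to p_+^f$ and $g^n(p) \to p_+^g$ at $v$, but the conclusion now requires a quantitative upgrade. The local normal forms of Theorem \ref{thm:dynamics-loxodromic-automorphisms} show that the $v$-adic distance from $f^n(p)$ (respectively $g^n(p)$) to infinity decays like $\lambda(f)^{-n}$ (respectively $\lambda(g)^{-n}$). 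Comparing these rates along the infinite common subsequence forces $\lambda(f)^a = \lambda(g)^b$ and an integer $c$ with $f^{an}(p) = g^{bn+c}(p)$ for infinitely many $n$. Applying the dynamical Mordell--Lang theorem of \cite{bellDynamicalMordellLangProblem2010} to the étale map $(f^a, g^b)$ on $X_0 \times X_0$ upgrades this to an arithmetic progression, from which Zariski density of a loxodromic orbit on $\G_m^2$ produces a common iterate.

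The principal obstacles are the Moriwaki step, where one must extract a genuine local place from an integral rather than a finite sum, and the quantitative upgrade in the torus case, where the qualitative convergence of Lemma \ref{lemme:convergence-vers-p} has to be sharpened to asymptotic rates before it can feed into Mordell--Lang. The Frobenius normaliser portion of the theorem, where only positive Banach density is assumed, is handled separately in \S\ref{sec:aut-F-A2} by combining the explicit structure of $\Aut_F(\A^2_K)$ with the density hypothesis to circumvent the failure of Corollary \ref{cor:same-eigenvaluations} in that case.
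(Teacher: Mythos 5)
Your proposal tracks the paper's overall structure closely: the reduction to a finitely generated field via Lemma \ref{lemme:reductions}, the production of a place with unbounded orbit by the Northcott property (Weil heights when $K$ is a number field or has positive characteristic, Moriwaki heights in the transcendental characteristic-zero case via Lemma \ref{lemme:compact-support-measure-arithmetic-polarisation}), the local argument at infinity leading to Corollary \ref{cor:local} when $X_0 \not\simeq \G_m^2$, and the separate treatments of the torus and of $\Aut_F(\A^2_K)$. But there is a genuine gap in your handling of the torus case.

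You appeal to the Bell--Ghioca--Tucker theorem \cite{bellDynamicalMordellLangProblem2010} to convert ``$f^{an}(p)=g^{bn+c}(p)$ for infinitely many $n$'' into an arithmetic progression. That theorem is proved only over fields of characteristic zero, while Theorem \ref{thm:relation-orbits-any-char} is asserted in arbitrary characteristic, so this step does not cover the positive-characteristic torus case. The paper avoids this by passing to the finitely generated abelian group $G \subset \G_m^2(K)$ generated by the translation parts of $f$ and $g$; since $M_f$ satisfies its Cayley--Hamilton relation $M_f^2 - (\Tr M_f)M_f + (\det M_f)\id = 0$, the orbit $(f^n(p), g^n(p))$ is a linear recurrence sequence in $G \times G$ and one applies Theorem 4.1 of \cite{ghiocaDynamicalMordellLangConjecture}, a characteristic-free Mordell--Lang-type statement for group endomorphisms. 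A second, softer gap: you claim that comparing the $v$-adic decay rates along the common subsequence ``forces $\lambda(f)^a = \lambda(g)^b$,'' but matching $\lambda(f)^{-n_k}$ against $\lambda(g)^{-m_k}$ only gives $n_k\log\lambda(f) - m_k\log\lambda(g) = O(1)$, which does not by itself produce a rational relation between the logarithms. The paper instead derives $\lambda(f)^a = \lambda(g)^b$ from the fact that $f$ and $g$ share an eigenvaluation (both orbits converge to the same satellite point in every cyclic dynamical completion, via Theorem \ref{thm:dynamics-infinity-alg-torus}, Proposition \ref{prop:convergence-infini-alg-torus} and Remark \ref{rmq:fixed-point-is-center-eigenvaluation}, not Lemma \ref{lemme:convergence-vers-p}, which is for the $K[X_0]^\times = K^\times$ dynamical completions), so both lie in $\Stab(\theta)$ for a common boundary point and the discreteness of dynamical degrees (Remark \ref{rmq:dynamical-degrees-discrete-subgroup}, from Blanc--Cantat) gives the relation; the rate comparison is then used only afterwards, once $M_f=M_g$, to obtain the bounded lag $|n-m|\leq C$.
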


We first prove the theorem when $X_0 \neq \G_m^2$ and $\langle f,g \rangle$ is not conjugated to a subgroup of
$\Aut_F (\A^2_K)$. We assume that $K$ is finitely generated over its prime field. By Lemma \ref{lemme:reductions}
\ref{item:plus}, we can suppose that $\OO_{f,+} (p) \cap \OO_{g, +} (q)$ is infinite. If $\langle f,g\rangle$ is not conjugated
to a subgroup of $\Aut_F (\A^2_K)$, Theorem
\ref{thm:relation-orbits-any-char} follows from Corollary \ref{cor:local} and the following lemma.
\begin{lemme}\label{lemme:place-orbit-unbounded}
  If $f$ is a loxodromic automorphism of $X_0 \not \simeq \G_m^2$ and $p \in X_0 (K)$ is not $f$-periodic, then there exists a
  place $v \in \cM (K)$ such that the forward orbit $\OO_{f,+} (p) \subset X_0 (K_v)$ is unbounded.
\end{lemme}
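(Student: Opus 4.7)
\medskip

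The plan is to argue by contradiction using the Northcott property of the Weil or Moriwaki height. Suppose that $\OO_{f,+}(p)$ is bounded in $X_0(K_v)$ for every $v \in \cM(K)$. I will show that $h(f^n(p))$ is then uniformly bounded in $n$, so by Northcott the orbit $\{f^n(p) : n \geq 0\}$ is finite and $p$ is $f$-periodic, contradicting the hypothesis.

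First fix a closed embedding $X_0 \hookrightarrow \A^N_K$ and write $f, f^{-1}$ as the restrictions to $X_0$ of polynomial self-maps $\tilde f, \widetilde{f^{-1}} : \A^N \to \A^N$ with coefficients in $K$. Take the height $h : \A^N(K) \to \R_{\geq 0}$ appropriate to the setting: the classical Weil height when $K$ is a number field, the function field height of Subsection 2.2 when $\car K > 0$, and the Moriwaki height of Subsection 2.3 when $K$ is finitely generated over $\Q$ of positive transcendence degree. In the Moriwaki case use Lemma \ref{lemme:compact-support-measure-arithmetic-polarisation} to pick the model $B$ so that $\mathrm{supp}(\mu_\C)$ is compact and avoids the poles of the coordinates of $p$ and of the coefficients of $\tilde f, \widetilde{f^{-1}}$. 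The non-archimedean contribution is controlled by the standard integrality argument: let $S \subset \cM(K)$ be the finite set of non-archimedean places at which either $\|p\|_v > 1$ or some coefficient of $\tilde f$ or $\widetilde{f^{-1}}$ fails to be $v$-integral. For $v \notin S$, the ultrametric inequality and induction on $n$ give $\|f^n(p)\|_v \leq 1$ so the contribution to $h(f^n(p))$ vanishes; for $v \in S$ the bounded-orbit hypothesis produces $B_v < \infty$ with $\|f^n(p)\|_v \leq B_v$ uniformly in $n$, so the total non-archimedean contribution is bounded by a finite sum over $S$.

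The archimedean contribution is immediate in the number field case (finitely many archimedean places) and absent in the function field case. In the Moriwaki setting one must bound $\int_{B(\C)} \log^+\|f^n(p)(b)\|_\C \, d\mu_\C(b)$ uniformly in $n$, and this is the main technical obstacle: the pointwise bounded-orbit hypothesis supplies $M(b) := \sup_n \|f^n(p)(b)\|_\C < \infty$ at each $b \in \mathrm{supp}(\mu_\C)$ but does not immediately yield a uniform $L^1(\mu_\C)$-envelope. I plan to handle this by exhausting the compact $\mathrm{supp}(\mu_\C)$ by the closed sets $E_N = \{b : M(b) \leq N\}$, applying monotone convergence to $\mu_\C(E_N) \nearrow \mu_\C(\mathrm{supp}(\mu_\C))$, and combining a cut-off at height $N$ with dominated convergence to extract the required uniform bound. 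Once $h(f^n(p))$ is uniformly bounded, Northcott forces the orbit to be finite and produces the $f$-periodicity contradiction, finishing the proof.
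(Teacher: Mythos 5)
Your non-archimedean and number-field/positive-characteristic arguments are fine and are essentially the contrapositive of what the paper does (integrality outside a finite set of places plus Northcott). The gap is exactly where you flag "the main technical obstacle", and your proposed fix does not close it. You want to deduce $\sup_n \int_{B(\C)} \log^+\|f^n(p)(b)\|\, d\mu_\C(b) < \infty$ from the pointwise hypothesis $M(b) := \sup_n \|f^n(p)(b)\| < \infty$ on the support of $\mu_\C$. This implication is false as a statement of measure theory: writing $g_n(b) = \log^+\|f^n(p)(b)\|$, the sets $E_N = \{M \le N\}$ do exhaust the support up to measure zero, but the estimate $\int g_n \, d\mu_\C \le (\log N)\mu_\C(E_N) + \int_{E_N^c} g_n \, d\mu_\C$ is useless unless you can make the second term small uniformly in $n$, and dominated convergence requires an \emph{integrable} dominating function, i.e. $\log^+ M \in L^1(\mu_\C)$ --- which is precisely what is not available. (Compare $g_n = n^3 \mathbf{1}_{[1/(n+1),1/n]}$ on $[0,1]$: $\sup_n g_n(b) < \infty$ at every $b$, $E_N$ closed and exhausting, yet $\int g_n \to \infty$; your cut-off argument would "prove" boundedness here too.) So from "bounded at every place" you cannot conclude that the Moriwaki height of $f^n(p)$ is bounded, and the contradiction with Northcott never gets off the ground. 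The paper is explicit about this: "a priori we could have that for every $b \in B(\C)$, the sequence $\|f^n(p)\|$ is bounded" even though the integrals blow up.

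What the paper actually does in the transcendental characteristic-zero case is not measure-theoretic but dynamical. It chooses the model $\sX \to B$ and the polarisation (via Lemma \ref{lemme:compact-support-measure-arithmetic-polarisation}) so that over the compact support $S$ of $\mu_\C$ the local normal form of $f$ at $p_+$ (Theorem \ref{thm:dynamics-loxodromic-automorphisms}) spreads out to invariant neighbourhoods $W_\epsilon^+$ of $p_+(S)$ and $W_\epsilon^-$ of $p_-(S)$, with $p(S) \cap W_\epsilon^- = \emptyset$, and so that the complement of $f^{-1}(W_\epsilon^+) \setminus W_\epsilon^-$ in $\sX(S)$ is a compact subset of affine space. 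If no vertical place works, unboundedness of the integrals (forced by Northcott) produces points $f^{T_n}(p(b_n))$ of arbitrarily large norm, which therefore must land in $f^{-1}(W_\epsilon^+) \setminus W_\epsilon^-$; the attracting structure of $W_\epsilon^+$ then gives $f^k(p(b)) \to p_+(b)$ for such a $b$, exhibiting a single archimedean place $|\cdot|_b$, $b \in \supp \mu_\C$, at which the orbit is unbounded. To repair your proof you would need this (or some substitute uniform estimate coming from the dynamics at infinity); pure cut-off/convergence arguments on the fibre integrals cannot supply it.
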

\begin{proof}
  Let $X$ be a dynamical completion of $f$. By a result of Goodman in \cite{goodmanAffineOpenSubsets1969}, there exists an ample
  effective divisor $H$ supported at infinity. We can suppose that $H$ is very ample to get an embedding $X
  \hookrightarrow \P^N_K$ such that $H$ is the restriction of the hyperplane $T_0 = 0$ where $T_0, \dots T_N$ are the
  homogeneous coordinates of $\P^N$. Then, we have an embedding of $X_0$ into $\A^N$ with affine coordinates $t_i :=
  \frac{T_i}{T_0}$. We have that there exists a polynomial endomorphism $u : \A^N_K \rightarrow \A^N_K$ such that $u$
  restricts to $f$ over $X_0$. Indeed the ring of regular functions of $X_0$ is of the form $\K [t_1, \dots, t_N] /
  (f_1, \dots, f_r)$ so we can lift any ring endomorphism of $X_0$ to an endomorphism of $\A^N_K$.

  Now the proof differs whether $K$ is transcendental over $\Q$ or not so we split the two cases in the next
  subsections.
\end{proof}

\subsection{Proof of the lemma when $K$ is a number field or $\car K > 0$.}\label{subsec:first-proof}
  Suppose $K$ is a number field or of positive characteristic.
  If $p \in X_0 (K)$, then for all but finitely many $v \in \cM (K)$ we have
  \begin{equation}
    \forall k \geq 0, \quad \log^+ \left| \left| u^k (p) \right| \right|_v = 0.
    \label{eq:<+label+>}
  \end{equation}
  Indeed, we can remove every non-archimedean $v$ such that all the coefficients of $u$ and all the coordinates of $p$
  have absolute
  value 1. Let $h$ be the Weil height over $\A^N_K (K)$. By the Northcott property, if the forward orbit $\OO_{f,+} (x)$
  is infinite, then the heights $h (f^k (x))$ must be unbounded and therefore there must exists $v$ such that the
  sequence $\log^+ \left| \left| f^k (p) \right| \right|_v $ is unbounded.

  \subsection{Proof of the lemma when $\tr.deg K /\Q \geq 1$}\label{subsec:second-proof}

The only remaining case in the proof is when $K$ is a field of characteristic 0 not algebraic over $\Q$. We have assumed
that $K$ is finitely generated over $\Q$ so that we can use use Moriwaki height. We will first do some preparations,
then choose an arithmetic polarisation of $K$ suitable to our needs.

Let $B_\Q$ be a normal projective variety over $\Q$ with function field $K$.
Consider the embedding $X \hookrightarrow \P^N_\Q
\hookrightarrow \P^N_{B_\Q}$. We write $\sX$ for the closure of $X$ in $\P^N_{B_\Q}$ and $q : \sX \rightarrow B_\Q$ for the
projective structure morphism. The automorphisms $f$ and $f^{-1}$ extend to birational maps $f^{\pm 1} : \sX
\dashrightarrow \sX$.  Let $\sV \subset \sX$ be the union of the vertical components of $\Ind (f : \sX
\dashrightarrow \sX)$ and $\Ind (f^{-1} : \sX \dashrightarrow \sX)$, then $q (\sV)$ is a strict closed subvariety of $B_\Q$.
Let $\Lambda \subset B$ be an open subset such that
\begin{enumerate}
  \item $\Lambda \cap q(\sV) = \emptyset$.
  \item $q_\Lambda : q^{-1}(\Lambda) \rightarrow \Lambda$ is flat.
  \item For every $\lambda \in \Lambda$, $q^{-1}(\lambda)$ is irreducible.
  \item The point $p \in X_0 (K) \subset X(K)$ defines a regular map $p : \Lambda \rightarrow \sX$.
\end{enumerate}
Let $(u,v)$ be local coordinates at $p_+$ in $X$ such that Theorem \ref{thm:dynamics-loxodromic-automorphisms}
\ref{item:local-normal-forms} holds.
Let $U,V$ be two rational functions over $\sX$ such that $U_{|X} = u$ and $V_{|X} = v$. There exists an open
affine neighbourhood $O^+$ of $p_+$ in $q^{-1}(\Lambda)$ such that $U,V$ are regular over $O^+$, $U = V = 0$ is the equation of
$p_+ (\Lambda) \cap O^+$ in $O^+$ and
Theorem \ref{thm:dynamics-loxodromic-automorphisms} \ref{item:local-normal-forms} holds with $U,V$ and regular
(resp. invertible regular) functions $\alpha,\beta,\phi, \psi_1, \psi_2$ over $\OO_+$. We define an affine neighbourhood $O^-$
of $p_-$ in $q^{-1}(\Lambda)$ similarly using the local normal form of $f^{-1}$ at $p_-$. Let $\sZ$ be the union of the
vertical components of $\sX \setminus O^+ \cup \sX \setminus O^-$, we replace $\Lambda$ by $\Lambda \setminus q (\sZ)$.

Now, let $H$ be a very ample effective divisor over $B_\Q$ such that the support of $H$ contains $B_\Q \setminus \Lambda$.
By Lemma \ref{lemme:compact-support-measure-arithmetic-polarisation}, there exists a normal projective variety $B$ over
$\spec \Z$ with generic fibre $B_\Q$ and an arithmetic polarisation of $K$ over
$B$ such that $\mu_\C$ is a compact subset of $B(\C) \setminus H (\C)$, hence a compact subset of $\Lambda (\C)$. Let
$h$ be the associated Weil height over $\A^N(K)$.

The sequence $(h(f^n) (p))_{n \geq 0}$ is unbounded because otherwise $p$ would be $f$-periodic by the Northcott property. The
height function is of the form
\begin{equation}
  h (x) = \sum_{\Gamma \subset B} a_\Gamma \log^+ \left| \left| x \right| \right|_\Gamma +
  \int_{B(\C)} \log^+ \left| \left| x(b) \right| \right|_\C d \mu_\C (b).
  \label{eq:<+label+>}
\end{equation}
By the same argument as in the previous paragraph for all but finitely many $\Gamma \subset B$ we have
\begin{equation}
  \log^+ \left| \left| f^n (p) \right| \right|_\Gamma = 0, \forall n \geq 0.
\end{equation}
Indeed, this will be true for any $\Gamma$ such that the coefficients of $u$ and the coordinates of $p$ have
$\Gamma$-absolute values equal to 1. Thus, we have two possibilities
\begin{enumerate}
  \item There exists $\Gamma$ such that $\log^+ \left| \left| f^n (p) \right| \right|_\Gamma$ is unbounded. In that
    case, $v = e^{-\ord_\Gamma}$ is the desired absolute value.
\item The sequence of integrals $\int_{B(\C)} \log^+ \left| \left| f^n (p(b)) \right|  \right| d \mu_\C (b)$
  is unbounded.
\end{enumerate}
For the second case, a priori we could have that for every $b \in B(\C)$, the sequence $\left| \left| f^n (p) \right|
\right|$ is bounded. We show this is not the case using the normal form of $f$ at $p_+$. Recall the definitions of $O^+$
and $O^-$. Let $S$ be the support of $\mu_\C$ over $\Lambda (\C)$, since $q : \sX
(\C) \rightarrow B(\C)$ is a proper map, $q^{-1} (S)$ is a compact subset of $\sX (\C)$. We denote it by $\sX (S)$.
The set $W_\epsilon^+ := \left\{x \in O^+ (\C) \cap \sX (S) : U (x), V (x) < \epsilon
\right\}$ is a relatively compact open neighbourhood of $p^+ (S)$ in $\sX(S)$. The functions $\alpha, \beta, \phi, \psi_1,
\psi_2$ appearing in the
local normal form of $f$ over $O^+$ are bounded over $W_\epsilon^+$ because it
is relatively compact in $O^+ (\C)$ and therefore for $\epsilon > 0$ small enough, $W_\epsilon^+$ is $f$-invariant and
if $x \in W_\epsilon^+ \cap q^{-1}(s)$ for some $s \in S$, then $f^n (x) \rightarrow
p_+(s)$ with respect to $\left| \left| \cdot \right| \right|_s$. Similarly, we can find a
relatively compact open neighbourhood of $p_- (S)$ in $\sX(S)$ which is $f^{-1}$-invariant.
 We can suppose up to shrinking $W_\epsilon^-$ that
\begin{equation}
p(S) \cap W_\epsilon^- = \emptyset
  \label{eq:condition-intersection}
\end{equation}
because $S$ is compact.

Furthermore, let $Y = X \setminus X_0$ and $\sY$ be the closure of $Y$ in $\sX$, we write $\sY (S) := \sY (\C) \cap
q^{-1}(S)$. The set
\begin{equation}
  f^{-1} (W_\epsilon^+) \setminus W_\epsilon^-   \label{eq:<+label+>}
\end{equation}
is a relatively compact open neighbourhood of $\sY (S) \setminus W_\epsilon^-$ in $\sX(S)$ because $f$
contracts $Y$ to $p_+$ and $W_\epsilon^-$ is $f^{-1}$-invariant. The complement of $f^{-1} (W_\epsilon^+) \setminus
W_\epsilon^-$ in $\sX (S)$ is a compact subset of $\A^N (\C) \cap q^{-1} (S)$.

Now, since the sequence of integrals $\int_{B(\C)} \log^+ \left| \left| f^n (p(b)) \right|  \right| d \mu_\C(b)$ is
unbounded, there exists a sequence $b_n \in \supp \mu_\C$ and a strictly increasing sequence of positive integers $T_n$ such that
\begin{equation}
  \log^+ \left| \left| f^{T_n} (p(b_n)) \right| \right| \geq n.
  \label{eq:size-explodes}
\end{equation}
Indeed, let $H$ be the total mass of $\mu_\C$, if we pick $T_n$ such that $\int_{B(\C)} \log^+ \left| \left| f^{T_n}
(p(b)) \right|  \right| d \mu_\C (b) \geq 2Hn$, then the set of $b$ such that  $\log^+ \left| \left| f^{T_n} (p(b))
\right|  \right| \geq n$ must be of positive measure. The sequence $f^{T_n}(p(b_n))$ cannot intersect the set
$W_\epsilon^-$, otherwise for some $n$ we would have $f^{T_n} (p(b_n)) \in W_\epsilon^-$ and by the
$f^{-1}$-invariance of $W_\epsilon^-$ we would get $p (b_n) \in W_\epsilon^-$ which contradicts
\eqref{eq:condition-intersection}. Now, we must have for $n$ large enough that
\begin{equation}
  f^{T_n} (p(b_n)) \in f^{-1} (W_\epsilon^+) \setminus W_\epsilon^-
  \label{eq:<+label+>}
\end{equation}
because otherwise the sequence $f^{T_n} (x(b_n))$ would be contained in a compact subset of $\A^N(\C)$ which
would contradict \eqref{eq:size-explodes}. Fix $b = b_n$ such a $b_n$, we have
\begin{equation}
  f^n (p(b)) \xrightarrow[n \rightarrow +\infty]{} p_+ (b)
  \label{eq:<+label+>}
\end{equation}
and the absolute value $\left| \cdot \right|_b$ is the desired absolute value.

\section{The algebraic torus}\label{sec:algebraic-torus}
\subsection{The group $\Aut (\G_m^2)$}\label{subsec:aut-group-algebraic-torus}
Any $K$-automorphism of $\Torus$ is of the form
\begin{equation}
  f (x,y) = (\alpha x^a y^b, \beta x^c y^d)
  \label{eq:<+label+>}
\end{equation}
where $\alpha, \beta \in K^\times$ and $ M_f = \begin{pmatrix}
  a & b \\
  c & d
\end{pmatrix}
\in \GL_2 (\Z)$. We will call such transformations \emph{pseudo-monomial}. The dynamical degree of $f$ is the spectral
radius of the matrix $A$. We will write the group law on $\Torus (K)$ additively and write
\begin{equation}
  f(x,y) = M_f (x,y) + b_f
  \label{eq:<+label+>}
\end{equation}
where $b_f = (\alpha, \beta)$. For pseudo-monomial transformations we have
\begin{equation}
  \lambda (f) = \rho (M_f).
  \label{eq:<+label+>}
\end{equation}

We call $M_f$ the \emph{monomial} part of $f$, we say that $M_f$ is \emph{loxodromic} if the
spectral radius $\rho(M_f)$ of $M_f$ is $> 1$ this is equivalent to the condition $\left| \Tr M_f \right| > 2 $. A
loxodromic matrix has two eigenvalues $\rho(M_f)$ and $\rho (M_f)^{-1}$. It acts on $\P^1 (\R)$ by a Möbius
transformation with exactly two irrational fixed points $v_+, v_-$. The fixed point $v_+$ is attracting with multiplier
$\frac{1}{\rho (f)}$ and $v_-$ is repulsing with with multiplier $\rho (f)$.
For two transformations $f,g$ we have
\begin{equation}
  f \circ g (x,y)= M_f M_g (x,y) + M_f (b_g) + b_f
  \label{eq:<+label+>}
\end{equation}
so that
\begin{equation}
  \Aut (\Torus) \simeq \GL_2 (\Z) \ltimes \Torus (K).
  \label{eq:<+label+>}
\end{equation}

\subsection{Dynamics at infinity}\label{subsec:dynamics-infinity}
The counterpart of Theorem \ref{thm:dynamics-loxodromic-automorphisms} is easier to establish in the case of the
algebraic torus. Start with the completion $\P^2$ of $\G_m^2$, its boundary is a triangle of lines. If we blow up any
intersection point of this triangle, we get a new completion of $\G_m^2$ with a cycle of rational curves at infinity. We
call them \emph{cyclic completions} and the intersection points of the rational curves at infinity will be called
\emph{satellite points}.

Let $X$ be a cyclic completion of $\G_m^2$ and $f \in \Aut (\G_m^2)$. We say that $f$ is \emph{algebraically stable}
over $X$ if
\begin{equation}
  \forall n \geq 0, f^n (\Ind(f^{-1})) \cap \Ind (f) = \emptyset.
  \label{eq:<+label+>}
\end{equation}
In particular, $f$ is algebraically stable if and only if $f^{-1}$ is.

\begin{thm}\label{thm:dynamics-infinity-alg-torus}
  Let $f$ be a loxodromic automorphism of $\G_m^2$, there exists a cyclic completion $X$ such that $f$  (and $f^{-1}$)
  are algebraically stable over $X$ and there is two finite disjoint sets of satellite points $\left\{ p_1, \dots, p_r
  \right\}, \left\{ q_1, \dots, q_s \right\}$ such that
  \begin{enumerate}
    \item $f$ is defined at $p_i$ and $f(p_i) = p_i$.
    \item $f^{-1}$ is defined at $q_j$ and $f^{-1}(q_j) = q_j$.
    \item For $N$ large enough, $f^N$ contracts $X \setminus \G_m^2$ to $\left\{ p_1, \dots, p_r \right\}$.
    \item For $N$ large enough, $f^{-N}$ contracts $X \setminus \G_m^2$ to $\left\{ q_1, \dots, q_s \right\}$.
    \item There exist local coordinates $(u,v)$ at $p_i$ such that $uv = 0$ is a local equation of $X \setminus \Torus$
      and
      \begin{equation}
        f (u,v) = (\alpha u^a v^b, \beta u^c v^d)
        \label{eq:<+label+>}
      \end{equation}
      where $\begin{pmatrix}
        a &b \\ c &d
      \end{pmatrix}$ is conjugated to $M_f$ by a matrix $M \in \GL_2 (\Z)$ which depends only on $p_i$.
    \item There exist local coordinates $(u,v)$ at $q_j$ such that $uv = 0$ is a local equation of $X \setminus \Torus$
      and
      \begin{equation}
        f^{-1} (u,v) = (\alpha u^a v^b, \beta u^c v^d)
        \label{eq:<+label+>}
      \end{equation}
      where $\begin{pmatrix}
        a &b \\ c &d
      \end{pmatrix}$ is conjugated to $M_{f^{-1}}$ by a matrix $M \in \GL_2 (\Z)$ which depends only on $q_i$.
\end{enumerate}
Furthermore, any cyclic completion above $X$ satisfies the same properties.
\end{thm}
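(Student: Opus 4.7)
My approach is via toric geometry: every cyclic completion of $\Torus$ is a smooth toric surface $X_\Sigma$ encoded by a complete smooth fan $\Sigma$ in $N_\R = \R^2$, the rays of $\Sigma$ index the boundary divisors forming the cycle $X \setminus \Torus$, and the adjacent ray pairs index the satellite points. A pseudo-monomial $f$ extends to $X_\Sigma$ as a rational map whose behaviour at infinity is controlled by the linear action of $M_f \in \GL_2(\Z)$ on $\Sigma$. Loxodromy of $f$ means $M_f$ has two irrational eigendirections $v_\pm \in \P^1(\R)$, and the induced Möbius action on $\P^1(\R)$ attracts every direction distinct from $v_-$ to $v_+$.

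The construction of $\Sigma$ proceeds in three steps. First, using far-out convergents of the continued fraction expansion of $v_+$, pick primitive vectors $n_1^+, n_2^+ \in \Z^2$ on opposite sides of the line $v_+$, forming a $\Z$-basis of $N$ (consecutive convergents have determinant $\pm 1$), and close enough that $M_f$ sends the two-dimensional cone $\sigma^+$ they span into $\Int \sigma^+$. Second, repeat for the antipodal half-line $-v_+$ (if the eigenvalue of $M_f$ on $v_+$ is positive, the antipodal cone is also $M_f$-invariant; if negative, $M_f$ swaps the two cones, and I incorporate this into the fan so that their union is $M_f$-invariant). Carry out the analogous construction at $v_-$ using $M_f^{-1}$. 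Third, fill in the remaining angular gaps between these invariant cones with further primitive rays to obtain a complete smooth fan, using Hirzebruch-style subdivisions. Let $X = X_\Sigma$, declare the $p_i$ to be the satellite points at the corners of the cones bracketing $\pm v_+$, and the $q_j$ similarly for $\pm v_-$.

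Items (1)--(2) are immediate from the $M_f$-invariance of each bracketing cone. For items (5)--(6), local coordinates $(u,v)$ at $p_i$ dual to the primitive generators $(n_1, n_2)$ of the two adjacent rays give $uv = 0$ as the local equation of $X \setminus \Torus$, and the character description of pseudo-monomial maps shows $f(u,v) = (\alpha u^a v^b, \beta u^c v^d)$ where the exponent matrix is $M_f$ written in the basis $(n_1, n_2)$---hence $\GL_2(\Z)$-conjugate to $M_f$---and all entries are $\geq 1$ by the strict invariance. For items (3)--(4), the Möbius dynamics implies $M_f^N \rho \in \Int \sigma^+$ or its antipodal analogue for any ray $\rho$ of $\Sigma$ not collinear with $v_-$ and $N$ large, so $f^N(D_\rho)$ is one of the $p_i$; the two rays collinear with $v_-$ are swept one iterate later. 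Algebraic stability of $f$ then follows: $\Ind(f)$ is disjoint from $\{p_i\}$, and every other point of $\Ind(f^{-1})$ is eventually collapsed to some $p_i$ by a power of $f$, while $\{p_i\}$ itself is $f$-fixed and outside $\Ind(f)$; the symmetric argument handles $f^{-1}$. The ``furthermore'' about cyclic completions above $X$ follows from the observation that refining $\Sigma$ preserves the invariant cones (now as unions of subcones), and the new corner satellite points inherit the same local form and contraction behaviour.

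The main obstacle is combinatorial bookkeeping: selecting continued-fraction convergents of the correct parity so that $n_1^+, n_2^+$ end up on opposite sides of $v_+$ while still forming a $\Z$-basis; splitting into cases based on the sign of the eigenvalues of $M_f$ to identify the correct number of bracketing cones and hence the correct values of $r$ and $s$; and verifying that the transition cones between the invariant ones admit smooth toric subdivisions. Once this bookkeeping is done, the attracting/repelling Möbius dynamics on $\P^1(\R)$ translates directly into the contraction and local normal form statements on the boundary cycle, and algebraic stability is automatic.
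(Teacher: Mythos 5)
Your toric construction is a genuinely different route from the paper's (the paper never builds an explicit fan: it starts from an arbitrary cyclic completion, notes that indeterminacy points are satellite points, invokes the Diller--Favre stabilization theorem to reach an algebraically stable cyclic model, and then rules out $f$-periodic boundary curves by a trace argument, so that every boundary component is eventually contracted onto a fixed satellite point). The problem is that the step you call ``automatic'' -- algebraic stability -- is exactly where the content lies, and your argument for it does not work as stated. For a pseudo-monomial map on the toric surface $X_\Sigma$, the satellite point $p_\tau$ of a two-dimensional cone $\tau$ lies in $\Ind(f^{-1})$ precisely when $\Int \tau$ contains $M_f\rho$ for some ray $\rho$ of $\Sigma$, and it lies in $\Ind(f)$ precisely when $M_f\tau$ is not contained in a single cone of $\Sigma$. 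Your construction only controls the cones bracketing the eigendirections; the ``transition'' cones are filled in arbitrarily, and nothing prevents a transition cone $\tau$ from satisfying both conditions at once (take $\tau$ away from the eigendirections and make $\Sigma$ fine near where $M_f\tau$ and $M_f^{-1}\tau$ land: then $p_\tau \in \Ind(f)\cap\Ind(f^{-1})$ and stability already fails at $n=0$; such fans can contain your invariant bracketing cones). So you must either choose the filling of the gaps with care and verify that the forward orbit of every point of $\Ind(f^{-1})$ stays in the locus where $f$ is defined and avoids $\Ind(f)$ -- this is the genuinely fiddly part, comparable to the known stabilization arguments for two-dimensional monomial maps -- or do what the paper does and get stability abstractly from Diller--Favre, observing that blowing up satellite points keeps the completion cyclic. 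Your sentence ``every other point of $\Ind(f^{-1})$ is eventually collapsed to some $p_i$ by a power of $f$'' presupposes the orbit is well defined, which is the very thing to be proved.

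A second unresolved point is one you yourself raise: when the dominant eigenvalue of $M_f$ is negative, $M_f$ swaps the two cones bracketing the antipodal rays $\pm v_+$, so neither cone is $M_f$-invariant and the two corresponding satellite points form a $2$-cycle rather than fixed points (indeed no strongly convex cone can satisfy $M_f\sigma\subseteq\sigma$ in that case). Your claim that ``items (1)--(2) are immediate from the $M_f$-invariance of each bracketing cone'' therefore contradicts your own case analysis, and you never explain how to recover $f(p_i)=p_i$ in that case (one has to pass to an iterate or modify the conclusion; the paper's proof asserts the orbit of a boundary curve stops at a fixed satellite point and does not dwell on this either, but your write-up flags the issue and then drops it). Two smaller points: since the eigendirections are irrational, no ray of $\Sigma$ is collinear with $v_-$, so the sentence about ``the two rays collinear with $v_-$'' is vacuous; and to be a cyclic completion in the paper's sense your fan must refine the fan of $\P^2$ (contain its three rays), which is easy to arrange but should be said.
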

Such completions will be called \emph{dynamical completions} of $f$.
\begin{proof}
  Start with the following fact. If $Y$ is a cyclic completion and $p$ is a satellite point such that $f(p) = p$ then
  there exists local coordinates at $p$ such that $f$ is pseudo-monomial monomial in these coordinates with monomial
  part conjugated $M_f$ by a matrix $M$ that depends only on $p$. Indeed, let $\pi : Y \rightarrow
  \P^2$ be the composition of blow-ups. Let $[X:Y:Z]$ be the
  projective coordinates over $\P^2$ and suppose for example that $\pi (p) = [1:0:0]$. Write $f (x,y) = M_f (x,y) + b$,
  then in the affine coordinates $(u,v) = (Y/X, Z/X)$ over the affine open subset $\left\{ X \neq 0 \right\}$, $f$
  induces a pseudo-monomial rational map with monomial part equal to $\begin{pmatrix}
    -1 & 1 \\ -1 & 0
  \end{pmatrix}
  M_f \begin{pmatrix}
    -1 & 1 \\ -1 & 0
  \end{pmatrix}^{-1}$. Now, $\pi$ is a composition of blow-up of satellite points. Let $\tau : Z \rightarrow X$ be the
  blow-up of a satellite point where $X$ is a cyclic completion. Let $p$ be one of the two satellite points belonging to the
  exceptional divisor. There exists local coordinates $(z,w)$ at $p$ and $u,v$ at $\tau (p)$ such that $zw = 0$ is a
  local equation of $Z \setminus \Torus$ and $uv = 0$ is a local equation of $X \setminus \Torus$ and such that
  \begin{equation}
    \tau (z,w) = (zw, w) \text{ or } \tau (z,w) = (z, zw).
    \label{eq:<+label+>}
  \end{equation}
  which corresponds respectively to the matrix $M_1 = \begin{pmatrix}
    1 & 1 \\
    0 & 1
  \end{pmatrix}
  $ and $M_2 = \begin{pmatrix}
    1 & 0 \\
    1 & 1
  \end{pmatrix}
  $ therefore there exists local coordinates $(u,v)$ at $p \in X$ such that $uv = 0$ is a local equation of $X \setminus
  \Torus$ at $p$ and $\pi^{-1} \circ f \circ \pi (u,v)$ is pseudo-monomial
  with monomial part of the form $M M_f M^{-1}$ where $M$ is a product of $M_1$ and $M_2$ that depends only on $p$.

  Now, for any cyclic completion $Y$, the indeterminacy points of $f^{\pm 1}$ can only be satellite points because of a
  combinatorial argument (see for example, \cite{cantatCommensuratingActionsBirational2019} Lemma 8.3), this also
  implies that if $f$ is defined at a satellite point $p$, then $f(p)$ must also be a satellite point. From
  \cite{dillerDynamicsBimeromorphicMaps2001}, we know that up to blowing up indeterminacy points of $f^{\pm 1}$ we will
  end up with an algebraically stable model of $f$. Putting this two fact together we get that there exists a cyclic
  completion $X$ such that $f$ and $f^{-1}$ are algebraically stable. Now, take $E$ an irreducible component of $X
  \setminus \G_m^2$, we show that for $N$
  large enough $f^N (E)$ must be contracted (to a satellite point). Otherwise, there would exist $N_0$ such that
  $f^{N_0} (E) = E$ and up to replacing $N_0$ by $2N_0$ we must have that the two satellite points of $E$ are fixed by
  $f^{N_0}_{|E}$.
  Let $p$ be one of them, either $f^{N_0}$ or $f^{-N_0}$ must be defined at $p$ by algebraic stability. Suppose that
  $f^{N_0}$ is, then in local coordinates $(u,v)$ at $p$ where $u = 0$ is a local equation of $E$ and $v=0$ is the other
  irreducible curve $F$ in $X \setminus X_0$ such that $p = E \cap F$ we have
  \begin{equation}
    f^{N_0} (u,v) = (\alpha u^a v^b, \beta v^d)    \label{eq:<+label+>}
  \end{equation}
  where $a,b,d \geq 0$ and the matrix $\begin{pmatrix}
    a &b \\ 0 & d
  \end{pmatrix}$ is conjugated in $\GL_2 (\Z)$ to the matrix $M_f$. This implies that $a = d = 1$ and $\Tr M_f = 2$
  then $A$ is not loxodromic, this is absurd.

  Now if $E$ is contracted to a satellite point $p$ by $f^N$, then $p$ is an indeterminacy point of $f^{-N}$ and thus
  cannot be an indeterminacy point of $f$ by algebraic stability. Thus, the forward orbit of $E$ is well defined and
  ends up consisting only of satellite points. Since, there are only finitely many of them, the forward orbit of $E$ must stop
  at a satellite point $p$ which is a fixed point of $f$. We define $p_1, \dots, p_r$ for the finite set of fixed
  satellite points that appear when doing this algorithm with every irreducible component $E$. And we define $q_1
  ,\dots, q_s$ for the satellite points defined by this algorithm with $f^{-1}$ instead of $f$. They satisfy the theorem.
\end{proof}

\begin{rmq}\label{rmq:fixed-point-is-center-eigenvaluation}
Each $p_i$ correspond to an eigenvaluation of $f$. Indeed, if $f$ is monomial at $p_i$ with a matrix $A = \begin{pmatrix}
  a & b \\ c &d
\end{pmatrix}
$, then for any monomial valuation $v_{s,t}$ at $p_i$, we have $f_* v_{s,t} = v_{as + tb, cs +t d}$ and $f_* v_* =
\lambda v_*$ for $v_* = v_{s,t}$ with $s,t$ an eigenvector of the matrix $A$ for the eigenvalue $\lambda$, the
ratio $s/t$ must be irrational. Since $A$ is a loxodromic matrix, the fixed point $v_*$ is attracting and for any
monomial valuation $v$ at $p_i$ we have
$\frac{1}{\lambda^n} f^n_* v \rightarrow v_*$. if we blow up $p_i$, then the valuation $v_*$ will become a monomial
valuation at a satellite point above $p_i$ and every divisor that was contracted to
$p_i$ will be contracted to the center of $v_*$ on this new model.
\end{rmq}

\begin{cor}\label{cor:no-invariant-curves-algebraic-torus}
  A loxodromic automorphism of $\G_m^2$ cannot admit an invariant curve.
\end{cor}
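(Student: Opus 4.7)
The plan is to argue by contradiction. Assume $C \subset \G_m^2$ is an irreducible $f$-invariant curve. I would first choose a dynamical completion $X$ of $f$ as in Theorem \ref{thm:dynamics-infinity-alg-torus} and let $\overline{C}$ denote the Zariski closure of $C$ in $X$. Since $X$ is projective and $\G_m^2$ is affine, the boundary $\overline{C} \setminus C$ is a nonempty finite subset of $X \setminus \G_m^2$. Using that any cyclic completion above $X$ is again a dynamical completion of $f$, I would perform finitely many blow-ups of satellite points so that, for a fixed large iterate $f^N$, no point of $\overline{C} \setminus C$ lies in the indeterminacy locus of $f^N$; in particular $f^N$ is a morphism on a neighborhood of each boundary point of $\overline{C}$.

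The first step is to locate one of the attracting satellite fixed points $p_i$ on $\overline{C}$. Choosing $N$ so that $f^N$ contracts $X \setminus \G_m^2$ to $\{p_1, \dots, p_r\}$ by Theorem \ref{thm:dynamics-infinity-alg-torus}(3), the image under $f^N$ of any boundary point of $\overline{C}$ belongs to this finite set and also to $f^N(\overline{C}) = \overline{C}$, so some $p_i$ lies on $\overline{C}$.

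Second, I would carry out a local analysis at $p_i$ in the coordinates $(u,v)$ of Theorem \ref{thm:dynamics-infinity-alg-torus}(5), in which $f^N(u,v) = (\alpha u^a v^b, \beta u^c v^d)$ with $A = \left(\begin{smallmatrix} a & b \\ c & d \end{smallmatrix}\right) \in \GL_2(\Z)$ conjugate to $M_f^N$, hence loxodromic. Since $C \subset \G_m^2$, the germ of $\overline{C}$ at $p_i$ has an irreducible analytic branch $C^*$ not contained in $\{uv = 0\}$; as there are only finitely many such branches, after replacing $f$ by a further iterate I may assume $C^*$ is individually invariant. A Puiseux parameterization $(u(t), v(t)) = (t^q, a t^p + \cdots)$ of $C^*$ assigns positive integer weights $(q,p)$ to the coordinates, and a direct substitution into the monomial formula shows that invariance of $C^*$ forces the vector $(q,p)$ to be an eigenvector of $A$; equivalently, the tangential slope of $C^*$ is a fixed point of the Möbius action of $A$ on $\P^1$. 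This is the same fact as in Remark \ref{rmq:fixed-point-is-center-eigenvaluation}, rephrased at the level of curve germs rather than monomial valuations.

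Finally, I would conclude with a rationality obstruction. For $A \in \GL_2(\Z)$ with $|\Tr A| > 2$ and $\det A = \pm 1$, the characteristic discriminant $\Tr(A)^2 - 4 \det(A)$ is a positive non-square integer, so the eigenvectors of $A$ have irrational slope, contradicting the rationality of $(q,p)$. The main obstacle I anticipate is the reduction step in Paragraph three: from the merely set-theoretic $f$-invariance of $\overline{C}$ one must pass to invariance of a single analytic branch at $p_i$ and show cleanly that its integer weight vector is an eigenvector of $A$; once this identification is in place the arithmetic contradiction is immediate.
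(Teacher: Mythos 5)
Your proof is correct and takes essentially the same route as the paper: locate a fixed satellite point $p_i \in \overline{C}$ via the contraction $f^N(X \setminus \G_m^2) \subset \{p_1,\dots,p_r\}$, then derive a contradiction from the monomial normal form at $p_i$. The paper phrases the local step as ``the differential of $f_{|\overline{C}}$ at $p_i$ is zero,'' which an automorphism of a curve cannot have at a fixed point; your Puiseux parametrization and the observation that invariance forces the weight vector $(q,p)$ to be a rational eigenvector of the loxodromic matrix $A$ is a more explicit rendering of the same argument.
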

\begin{proof}
  Let $f$ be a loxodromic automorphism and $C$ be an invariant curve. We fix a completion $X$ of $\Torus$ that satisfies
  Theorem \ref{thm:dynamics-infinity-alg-torus}. Let $\overline C$ be the Zariski closure of $C$ in $X$, the curve
  $\overline C$ must intersect $X \setminus \Torus$ and the intersection points must be one the $p_i$'s or one of the
  $q_j$'s. Suppose $p_1 \in \overline C$, then $f_{|\overline C}$ is an automorphism of a curve and $p_1$ is a fixed
  point of $f_{|\overline C}$ but by the monomial local normal form of $f$ at $p$ we have that the differential of
  $f_{|\overline C}$ at $p$ is zero and this is a contradiction.
\end{proof}

\begin{prop}\label{prop:convergence-infini-alg-torus}
  Let $K$ be a complete field with an absolute value $\left| \cdot \right|$ and $f$ a loxodromic automorphism of $\G_m^2$ defined
  over $K$. If $p \in \Torus(K)$ is such that the forward $f$-orbit of $p$ is unbounded, then for any dynamical
  completion $X$ of $f$, there exists $i_0$ such that $f^n (p) \xrightarrow[n \rightarrow +\infty]{} p_{i_0}$.
\end{prop}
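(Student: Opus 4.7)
The plan is to mimic Lemma \ref{lemme:convergence-vers-p}, replacing the unique attracting fixed point of a loxodromic automorphism of $X_0 \neq \G_m^2$ by the finite set $\{p_1, \ldots, p_r\}$ of attracting satellites supplied by Theorem \ref{thm:dynamics-infinity-alg-torus}. In the intended applications $K$ is locally compact, so I will assume $X(K)$ is compact for the $v$-adic topology. The unbounded forward orbit $(f^n(p))_{n \geq 0}$ then admits at least one accumulation point $q \in X(K)$, and unboundedness in $\Torus(K)$ forces $q \in (X \setminus \Torus)(K)$. Fix a subsequence with $f^{n_k}(p) \to q$.

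I would first rule out that $q$ coincides with some $q_j$, exactly as in Lemma \ref{lemme:convergence-vers-p}. The local normal form in Theorem \ref{thm:dynamics-infinity-alg-torus}(6) supplies a fundamental system of open neighbourhoods $W_j$ of $q_j$ in $X(K)$ with $f^{-1}(W_j) \Subset W_j$; backward invariance then forces $p = f^{-n_k}(f^{n_k}(p)) \in W_j$ for arbitrarily small $W_j$, so $p = q_j \notin \Torus(K)$, a contradiction. When $q = p_{i_0}$ for some $i_0$, the $f$-invariant basin derived from Theorem \ref{thm:dynamics-infinity-alg-torus}(5) eventually captures the orbit and the normal form yields $f^n(p) \to p_{i_0}$, which is the desired conclusion.

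It remains to treat the case where $q \in (X \setminus \Torus)(K)$ is neither a $p_i$ nor a $q_j$. Invoking Theorem \ref{thm:dynamics-infinity-alg-torus}(3), there exists $N_0$ such that $f^{N_0}$ contracts every irreducible component of $X \setminus \Torus$ to $\{p_1, \ldots, p_r\}$. Provided $q \notin \Ind(f^{N_0})$, continuity of $f^{N_0}$ at $q$ gives $f^{N_0}(q) = p_{i_0}$ for some $i_0$, and $f^{n_k + N_0}(p) \to p_{i_0}$ exhibits $p_{i_0}$ as an accumulation point of the full orbit, reducing to the case already handled.

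The main obstacle is the possibility that $q$ lies in $\Ind(f^{N})$ for every large $N$, equivalently that the forward orbit of $q$ through the satellite set hits $\Ind(f)$ before reaching any $p_i$. To resolve this I would pass to a cyclic completion $\pi : X' \to X$ above $X$ obtained by blowing up the relevant indeterminacy points at infinity; by the last assertion of Theorem \ref{thm:dynamics-infinity-alg-torus}, $X'$ is again a dynamical completion of $f$. The sequence $f^{n_k}(p)$ lifts canonically to $X'(K)$ and has an accumulation point $q' \in X'(K)$ with $\pi(q') = q$; after finitely many such blow-ups one can arrange that $q'$ lies outside $\Ind_{X'}(f^{N'})$ for some $N'$, reducing to the previous step and concluding the proof.
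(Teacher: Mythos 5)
Your proof follows the same outline as the paper's: use compactness of $X(K)$ to produce an accumulation point $q$ at infinity, dispose of $q = q_j$ via backward invariance of small neighbourhoods, settle $q = p_i$ by the local attracting normal form, and for any other $q$ push forward by $f^{N_0}$ into $\{p_1, \ldots, p_r\}$. The first two cases are handled exactly as in the paper and are correct (the assumption that $X(K)$ is compact, rather than merely quasi-compact, is indeed what the paper also uses when it invokes ``$X(K_v)$ is a compact space'' in \S\ref{subsec:heights}).

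Where you go beyond the paper is in flagging that the third step silently assumes $f^{N_0}$ is defined at $q$; the paper simply writes ``$f^{N_0}(q) = p_i$'' without addressing $\Ind(f^{N_0})$. This is a legitimate observation, so your instinct is good. However, the repair you propose is not actually a proof: the assertion that ``after finitely many such blow-ups one can arrange that $q'$ lies outside $\Ind_{X'}(f^{N'})$'' is exactly the termination statement that needs to be established, and you give no argument for it. Without a Noetherian-type or continued-fraction argument bounding the blow-ups, the inductive loop could a priori run forever. There is in fact a cleaner way to close the gap that avoids passing to higher models altogether. If $q$ lies in the interior of a boundary curve $E$ (i.e.\ $q$ is not a satellite), then $q \notin \Ind(f)$ since indeterminacy points are satellites, and by toric equivariance $f$ maps the open orbit of $E$ onto the open orbit of $f(E)$ when $f(E)$ is a curve, so $f^k(q)$ is never a satellite until the curve $f^k(E)$ is contracted; at that step $f^{k+1}(q) = f(f^k(E)) \in \Ind(f^{-1})$, and algebraic stability guarantees the forward orbit of this satellite never meets $\Ind(f)$, hence terminates at some $p_i$. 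Only the sub-case where $q$ itself is one of the finitely many satellites in $\Ind(f)$ needs a separate treatment, and that is where your blow-up idea belongs — but it still requires the termination argument you left unproved. As written, your proof has the same soft spot as the paper's, merely made explicit rather than resolved.
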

\begin{proof}
  Since $X(K)$ is compact, the sequence $f^n (p)$ must accumulate to a point $q \in X(K) \setminus \Torus (K)$. If $q$
  is one of the $p_i$, then because $p_i$ is a local attracting fixed point of $f$ we must have that $f^n (p)
  \rightarrow p_i$.

  If $q = q_j$, then since $q_j$ is a local attracting fixed point of $f^{-1}$ we must have that $q$ belongs to any small enough
  Euclidian neighbourhood of $q_j$ because any such neighbourhood is $f^{-1}$-invariant and this is a contradiction.

  Finally, if $q$ is any other point at infinity, then for some $N_0$ large enough, we have $f^{N_0} (q) = p_i$ for some
  $i$ and by continuity we fall back to the case $q = p_i$.
\end{proof}

\subsection{Proof of the Theorem}\label{subsec:proof-thm-alg-torus}
Let $f,g$ be two loxodromic automorphisms of $\G_m^2$ and suppose that there exists $p,q \in \G_m^2 (K)$ such that
$\OO_f (p) \cap \OO_g (q)$ is infinite. By Lemma \ref{lemme:reductions} \ref{item:same-point} we can suppose that
$p = q$ and by conjugation with the translation $(x,y) \mapsto (x,y) + p$ we can suppose that $p = (1,1)$.

Let $\left| \cdot \right|_v$ be an absolute value over $K$. Let $M_f$ be the monomial part of $f$ and $b_f = (\alpha,
\beta)$ be the translation part of $f$, we define the
notation $\log \left| b_f \right|_v := (\log \left| \alpha \right|_v, \log \left| \beta \right|_v)$. Write $f^n (p) = (\alpha_n,
\beta_n)$ and define $u_n = (\log \left| \alpha_n \right|_v, \log \left| \beta_n \right|_v)$, then $u_n$ satisfies
\begin{equation}
  u_{n+1} = M_f u_n + \log \left| b_f \right|_v.
  \label{eq:<+label+>}
\end{equation}
The matrix $M_f$ has eigenvalues $\lambda (f)$ and $1/\lambda (f)$ with eigenvectors $w_+$ and $w_-$, thus $u_n$ is of
the form
\begin{equation}
  u_n = a_+(v) \lambda (f)^n w_+ + a_- (v) \frac{1}{\lambda (f)^n} w_- - w_0(v)
  \label{eq:asymptotique-suite}
\end{equation}
where $w_0(v) = (\id - M_f)^{-1} \log \left| b_f \right|_v = a_+(v) w_+ + a_- (v) w_-$. Notice that $(\id - M_f)$ is
indeed an invertible matrix because $\lambda (f) \neq 1$.

\begin{lemme}\label{lemme:place-orbit-unbounded-torus}
There exists an absolute value $\left| \cdot \right|$ over $K$ such the sequence $(f^n (p))$ is unbounded in $\G_m^2
(K)$.
\end{lemme}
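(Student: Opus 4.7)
The plan is to use the Northcott property together with the explicit recurrence \eqref{eq:asymptotique-suite}, in the same spirit as Lemma \ref{lemme:place-orbit-unbounded}. Since $\OO_f(p) \cap \OO_g(q)$ is infinite and $p = (1,1)$, the orbit $\OO_f(p)$ is infinite and hence $p$ is not $f$-periodic. A suitable Weil or Moriwaki height $h$ on $\G_m^2$, for instance the one coming from the embedding $\G_m^2 \hookrightarrow \P^1_K \times \P^1_K$, has the Northcott property, so $h(f^n(p))$ must be unbounded.

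The key observation is that \eqref{eq:asymptotique-suite} translates the desired conclusion into a very concrete statement: the sequence $(f^n(p))_n$ is bounded in $\G_m^2(K_v)$ if and only if $\|u_n(v)\|$ is bounded, if and only if $a_+(v)=0$. In that case the explicit formula yields $\|u_n(v)\| \leq 2|a_-(v)|\,\|w_-\|$ uniformly in $n$, where $a_-(v)$ is a fixed real-linear combination of $\log|\alpha|_v$ and $\log|\beta|_v$, depending only on $M_f$. I will argue by contradiction, assuming $a_+(v)=0$ for every $v \in \cM(K)$, and derive that $h(f^n(p))$ is bounded.

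For the non-archimedean places, this is immediate: since each $f^n(p)$ is a pair of monomials in $\alpha$ and $\beta$, one has $|\alpha_n|_v = |\beta_n|_v = 1$ for all $n$ whenever $|\alpha|_v = |\beta|_v = 1$, so all but finitely many $v$ contribute zero to the height. For each of the finitely many remaining $v$, the bound $\|u_n(v)\| \leq 2|a_-(v)|\,\|w_-\|$ makes the local contribution uniformly bounded in $n$. When $K$ is a number field or has positive characteristic, this already contradicts Northcott and closes the argument.

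The main obstacle is the Moriwaki case $\tr.deg K/\Q \geq 1$, in which one must also control the archimedean integral $\int_{B(\C)} \log^+\|f^n(p(b))\|_\C\, d\mu_\C(b)$ uniformly in $n$. My plan is to invoke Lemma \ref{lemme:compact-support-measure-arithmetic-polarisation} to choose an arithmetic polarization whose measure $\mu_\C$ is supported in a compact subset of the Zariski-open locus of $B(\C)$ where $\alpha$ and $\beta$ are regular and nonvanishing; on this compact set, the function $b \mapsto a_-(v_b)$ is continuous and hence bounded. Under the assumption $a_+(v_b)=0$, the pointwise bound $\|u_n(b)\| \leq 2|a_-(v_b)|\,\|w_-\|$ then integrates to a uniform bound on the archimedean contribution, which combined with the finite-place bound produces the desired contradiction with Northcott.
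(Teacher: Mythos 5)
Your proposal is correct, and it rests on the same two pillars as the paper's proof --- the eigen-decomposition \eqref{eq:asymptotique-suite} and the Northcott property --- but the execution is genuinely different. The paper's argument is shorter: it notes that boundedness of the sequence at a place $v$ (read for the two-sided orbit) forces $a_+(v)=a_-(v)=0$, hence $u_n\equiv 0$, so every local norm $\left\| f^n(p)\right\|_v$ equals $\left\| p\right\|_v$ exactly; the height is then literally constant along the orbit for \emph{any} height function, and Northcott concludes with no case analysis and no choice of arithmetic polarisation. You instead negate only forward unboundedness, so you only get $a_+(v)=0$ and must bound, rather than compute, the height; this is what forces your three-way case split and, in the Moriwaki case, the choice via Lemma \ref{lemme:compact-support-measure-arithmetic-polarisation} of a polarisation whose measure is supported in a compact set avoiding the zeros and poles of $\alpha,\beta$ --- exactly the device the paper uses in \S \ref{subsec:second-proof} for general affine surfaces, but which it does not need for this lemma. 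Your heavier route does buy something: it directly produces a place at which the \emph{forward} orbit is unbounded (i.e. $a_+(v)\neq 0$), which is what is actually invoked afterwards in Proposition \ref{prop:same-matrix} and in the final argument, whereas the paper's constancy trick is an equivalence only for the two-sided sequence. Two points you should make explicit to be complete: first, a very ample effective divisor $H$ on $B_\Q$ whose support contains the zero and polar loci of $\alpha$ and $\beta$ does exist (add to such an effective divisor a general member of a large multiple of an ample class, as in \S \ref{subsec:second-proof}), so Lemma \ref{lemme:compact-support-measure-arithmetic-polarisation} applies; second, your bound $\left\| u_n\right\|\leq 2\left|a_-(v)\right|\left\| w_-\right\|$ uses that $w_0(v)=a_-(v)w_-$ once $a_+(v)=0$, which is indeed the paper's normalisation of $a_\pm(v)$, and that only finitely many non-archimedean places (or vertical divisors $\Gamma$) see $\alpha$ or $\beta$ with absolute value different from $1$, since $f^n(1,1)$ is a pair of monomials in $\alpha,\beta$.
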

\begin{proof}
  For any absolute value $v$, by \eqref{eq:asymptotique-suite} the sequence $(f^n (p))_{n \geq 0}$ is bounded with
  respect to $v$ if and only if $a_+ (v) = a_- (v) = 0$.
   If that was the case for every absolute value $\left| \cdot
\right|$, then we would get for any absolute value $\left| \left| f^n (p) \right| \right| = \left| \left| p
\right| \right|$ and therefore $h(f^n(p)) = h(p)$ for any height function $h$. By the Northcott property, $p$ would
be $f$-periodic.
\end{proof}

\begin{prop}\label{prop:same-matrix}
  If $f,g$ are loxodromic automorphisms of $\G_m^2$ such that $\OO_f (p) \cap \OO_g (q)$ is infinite, then there exists
  $m,n \in \Z \setminus \left\{ 0 \right\}$ such that $M_f^n = M_g^m$.
\end{prop}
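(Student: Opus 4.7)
The plan is to extract the relation $M_f^n = M_g^m$ from an asymptotic comparison of the two orbits at a place where they both escape. After the reductions of Lemma \ref{lemme:reductions} items (1) and (2), followed by a translation, I may assume $p = q = (1,1)$ and that $\OO_{f,+}(p) \cap \OO_{g,+}(p)$ is infinite, producing strictly increasing sequences $n_k, m_k \to +\infty$ with $f^{n_k}(p) = g^{m_k}(p)$. Lemma \ref{lemme:place-orbit-unbounded-torus} yields $v \in \cM(K)$ at which the forward $f$-orbit of $p$ is unbounded. The common values $f^{n_k}(p) = g^{m_k}(p)$ are then also unbounded at $v$, so the forward $g$-orbit of $p$ is unbounded at $v$, and the asymptotic formula \eqref{eq:asymptotique-suite} applied to both $f$ and $g$ forces $a_+^f(v) \neq 0$ and $a_+^g(v) \neq 0$.

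The core step is to equate $u_{n_k}^f$ and $u_{m_k}^g$ as vectors in $\R^2$ and read off the limiting direction. Since $\lambda(f)^{-n_k}$ and $\lambda(g)^{-m_k}$ both tend to $0$, the identity reduces asymptotically to
\begin{equation*}
  a_+^f(v)\,\lambda(f)^{n_k}\,w_+^f + O(1) \;=\; a_+^g(v)\,\lambda(g)^{m_k}\,w_+^g + O(1).
\end{equation*}
Dividing by the Euclidean norm $|u_{n_k}^f| = |u_{m_k}^g|$ (which grows like $\lambda(f)^{n_k}$ because $a_+^f(v) \neq 0$) and letting $k \to +\infty$, each side converges to a unit vector proportional to $w_+^f$ and $w_+^g$ respectively, so the expanding eigenlines of $M_f$ and $M_g$ must agree in $\R^2$. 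The main subtlety here is the non-cancellation estimate for $|u_{n_k}^f|$: one needs to verify that the dominant term $a_+^f(v)\lambda(f)^{n_k} w_+^f$ genuinely controls the norm, which follows from $a_+^f(v)\neq 0$ and the uniform boundedness of the remaining contributions.

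Once the expanding eigendirections coincide, the conclusion is algebraic number theory. Because $M_f, M_g \in \GL_2(\Z)$ are loxodromic, the eigenvalue $\lambda(f)$ is a quadratic unit in a real quadratic field $\Q(\sqrt{d})$, and the common eigenvector $w_+^f$ has coordinates in $\Q(\sqrt{d})$. The Galois involution $\sigma$ of $\Q(\sqrt{d})/\Q$ fixes the integer matrices $M_f, M_g$ but sends $w_+^f$ to the other eigenvector $w_-^f$ of $M_f$; since $w_+^f$ is also an eigenvector of $M_g$, so is $w_-^f = \sigma(w_+^f)$, hence $M_f$ and $M_g$ are simultaneously diagonalizable and commute. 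Dirichlet's unit theorem applied to the appropriate order in $\Q(\sqrt{d})$ shows that its unit group is free of rank $1$ modulo $\{\pm 1\}$, so there exist nonzero integers $n, m$ (taken even to absorb a possible sign) with $\lambda(f)^n = \lambda(g)^m$. Applying $\sigma$ gives the companion relation between the other eigenvalues, so $M_f^n$ and $M_g^m$ act by the same scalars on each of the two common eigenvectors, whence $M_f^n = M_g^m$.
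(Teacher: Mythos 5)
Your proof is correct, but it takes a genuinely different route from the paper. After the common reductions and the choice of a place $v$ with unbounded forward orbit, the paper never compares the recursions for $f$ and $g$ directly: it passes to a cyclic dynamical completion, shows via Proposition \ref{prop:convergence-infini-alg-torus} that both orbits converge to the same satellite fixed point, identifies a common eigenvaluation (Remark \ref{rmq:fixed-point-is-center-eigenvaluation}), invokes the spectral-gap discreteness of dynamical degrees in $\Bir(\P^2)$ (Remark \ref{rmq:dynamical-degrees-discrete-subgroup}) to get $\lambda(f)^a=\lambda(g)^b$, and finally argues that the monomial part of $f^a g^{-b}$ is a M\"obius map of $\P^1(\R)$ with an irrational fixed point of derivative $1$, hence the identity. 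You instead stay entirely inside the linear recursion \eqref{eq:asymptotique-suite} at the single place $v$ (which the paper only exploits later, in the last step of the torus theorem): equating $u_{n_k}^f=u_{m_k}^g$ and normalizing shows the expanding eigenlines of $M_f$ and $M_g$ coincide, and then Galois conjugation in the real quadratic field plus Dirichlet's unit theorem replaces both the completion machinery and the Blanc--Cantat spectral-gap input. This is more elementary and self-contained; the paper's route has the advantage of running parallel to the general (non-torus) case and of reusing the valuative framework it needs anyway. Two small points to tighten: (i) Lemma \ref{lemme:place-orbit-unbounded-torus} as proved gives $(a_+(v),a_-(v))\neq(0,0)$ for some $v$, so to guarantee \emph{forward} unboundedness (i.e.\ $a_+^f(v)\neq 0$) you should note that if $a_+^f(v)=0$ for every $v$ then the forward heights are bounded and Northcott forces $p$ to be $f$-periodic — the same one-line argument as in the lemma, and the same reading the paper itself uses; (ii) since the leading eigenvalue may be negative, the normalized vectors $u_{n_k}^f/\lVert u_{n_k}^f\rVert$ may alternate in sign, so pass to a subsequence before taking the limit direction — your "even exponents" device then handles the corresponding sign issue in the algebraic step, as you indicate.
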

\begin{proof}
  We can suppose that $p = q = (1,1)$ and $ \OO_{f,+} (p) \cap \OO_{g,+} (p)$ is infinite.  By Lemma
  \ref{lemme:place-orbit-unbounded-torus}, there exists an absolute
  value $\left| \cdot \right|$ such that the $f,g$-forward orbit of $p$ is unbounded. Let $X$ be a cyclic dynamical completion
  of $f$ and $g$. By Proposition
  \ref{prop:convergence-infini-alg-torus}, $f^n (p)$ and $g^n (p)$ must converge towards the same satellite point
  $p_{i_0}$ at infinity and this must be true for any cyclic completion above $X$. Therefore, by Remark
  \ref{rmq:fixed-point-is-center-eigenvaluation}, $f$ and $g$ have the same eigenvaluation $v_*$ at $p_{i_0}$.
  Therefore, for every
  $h \in \langle f,g \rangle$, there exists $t_h$ such that $h_* v_* = t_h v_*$ and $t_h$ or $t_h^{-1}$ must be the
  dynamical degree of $h$.  Now applying Remark \ref{rmq:dynamical-degrees-discrete-subgroup} with the map $h \in
  \langle f, g \rangle \mapsto \log t_h $ we have that $\lambda(f)^n = \lambda(g)^m$ for some $n,m \in \Z \setminus
  \left\{ 0 \right\}$. This implies that the monomial form $A$ of $f^n g^{-m}$ at $p_{i_0}$ acting on $\P^1 (\R)$ has an
  irrational fixed point $t_*$ and such that the derivative satisfies $A' (t_*) =1$. Since $a \in \GL_2 (\Z)$ this
  implies that $A$ is the identity matrix. Since the matrix of the
  monomial form of $f,g$ at $p_{i_0}$ is equal to $M M_f M^{-1}$ and $M M_g M^{-1}$ respectively for some matrix $M$ we
  must have $M_f^m = M_g^n$.
\end{proof}

We can now finish the proof.
\paragraph{\textbf{Proof of Theorem \ref{thm:relation-orbits-any-char} for $\Torus$}}
Suppose $f,g$ are loxodromic automorphisms of $\G_m^2$ defined over a field $K$ and such that there exists $p,q$ such
that $\OO_f (p) \cap \OO_g (q)$ is infinite. Then, up to taking iterates we can suppose that $M_f = M_g$ by Proposition
\ref{prop:same-matrix} and Lemma \ref{lemme:reductions} \ref{item:iterate}. In particular, we have $\lambda (f) =
\lambda (g) =: \lambda$.

By Lemma \ref{lemme:reductions} \ref{item:same-point}, we can suppose that $p = q = (1,1)$ and that $\OO_{f, +} (p) \cap
\OO_{g,+} (p)$ is
infinite. By Lemma \ref{lemme:place-orbit-unbounded-torus} we can suppose that the forward $f,g$-orbit of $p =
(1,1)$ is unbounded for some fixed absolute value $\left| \cdot \right|_v$. Now using Equation
\eqref{eq:asymptotique-suite} we have
\begin{align}
  u_n (f) &= a_+^f (v) \lambda^n w_+ + a_-^f (v) \frac{1}{\lambda^n} w_- - w_0 (f) \\
  u_n (g) &= a_+^g (v) \lambda^n w_+ + a_-^g (v) \frac{1}{\lambda^n} w_- - w_0 (g)
  \label{<+label+>}
\end{align}
And $a_+^f(v) a_-^f (v) \neq 0$.
In particular, there exists a positive integer $C > 0$ such that for $n,m \geq 0$ large enough $u_n (f) = u_m (g)$ implies that
$0 = \left| m-n \right| \leq C$. Therefore there exists $l \in \left\{ -C, \dots, C  \right\}$ such that for infinitely
many $n \geq 0$ we have
\begin{equation}
  f^n (p) = g^{n+l} (p).
  \label{eq:<+label+>}
\end{equation}

Write $(\alpha_f, \beta_f) \in \Torus(K)$ for the translation part of $f$ and define
$(\alpha_g, \beta_g)$ similarly. Let $G$ be the subgroup of $\Torus(K)$ generated by
\begin{equation}
  (\alpha_h, 1), (\beta_h, 1), (1, \alpha_h), (1, \beta_h)
  \label{eq:<+label+>}
\end{equation}
for $h = f,g$. The automorphism $f,g$ restrict to selfmaps $f,g : G \rightarrow G$ of the form $\phi(x) = Ax +b$ where
$A : G \rightarrow G$ is a group homomorphism. Let $H$ be the subgroup of $G^2$ defined by
\begin{equation}
  H = \left\{ (u,v) \in G^2 : u = A^l (v) \right\}.
  \label{eq:<+label+>}
\end{equation}
Then, we have
\begin{equation}
  V = \left\{ (x,y) \in G^2 : x = g^l (y) \right\} = H - (0, A^{-l} b_l )
  \label{eq:<+label+>}
\end{equation}
where $g^l (x) = A^l x + b_l$. Since $A^2 - (\Tr A) A + (\det A) \id = 0$ we have by Theorem 4.1 of
\cite{ghiocaDynamicalMordellLangConjecture} that the set
\begin{equation}
  \left\{ n \geq 0: (f^n (1,1), g^n(1,1)) \in V \right\}
  \label{eq:<+label+>}
\end{equation}
is a finite union of arithmetic progression.

Thus, there exists $a,b \in \Z$ such that for every $k \geq 0$,
\begin{equation}
  f^{ak+b} (p) = g^{ak+b+l} (p).
  \label{eq:<+label+>}
\end{equation}
So by setting $x = f^b (p)$ and $y = g^{b+l} (q)$ and replacing $f,g$ by $f^a, g^a$, we have for every $k \geq 0$
\begin{equation}
  f^k (x) = g^k (y).
  \label{eq:<+label+>}
\end{equation}
Thus, on the set $\OO_{f, +} (x)$ we have $f = g$. The Zariski closure of the forward $f$-orbit of $x$ is Zariski dense
because a loxodromic automorphism cannot have an invariant curve by Corollary
\ref{cor:no-invariant-curves-algebraic-torus} and the result is shown.

\section{The group $\Aut_F(\A^2_K)$}\label{sec:aut-F-A2}
We prove Theorem \ref{thm:relation-orbits-any-char} for $f,g \in \GL_2 (A) \ltimes (\G_a (K) \times \G_a (K))$ with the
additional hypothesis on the positivity of the Banach density of the set
\begin{equation}
  \left\{ n \in \Z : \exists m \in \Z,  f^n (p) = g^m (q) \right\}
  \label{eq:<+label+>}
\end{equation}
By Lemma \ref{lemme:reductions} \ref{item:plus} and \ref{item:same-point}, we can suppose that $p=q$ and that the set
\begin{equation}
  \left\{ n \in \Z_{\geq 0} : \exists m \in \Z_{\geq 0},  f^n (p) = g^m (q) \right\}
  \label{eq:<+label+>}
\end{equation}
is of positive Banach density. Now, by
Lemma \ref{lemme:place-orbit-unbounded} and Corollary \ref{cor:local} we have that $f,g$ have the same eigenvaluation
$v_+$ and that up to replacing $f,g$ by some iterates we have $\lambda(f) = \lambda (g)$ by Remark
\ref{rmq:dynamical-degrees-discrete-subgroup}. In the case of polynomial
automorphism of the plane, the dynamical degree is an integer $d$. Let $X$ be a dynamical completion of $f$ and $g$, by
\cite{abboudDynamicsEndomorphismsAffine2023} Theorem 14.4, the local normal form at $p_+$ of $f$ and $g$ is of the form
\begin{equation}
  f (z,w) = (z^d \phi_1 (z,w), \phi_2 (z,w)), \quad g (z,w) = (z^d \psi_1 (z,w), \psi_2 (z,w)).
  \label{eq:<+label+>}
\end{equation}
where $\phi_1, \psi_1$ are regular invertible functions near $p_+$. We pick an absolute value $\left| \cdot \right|$
over $K$ such that $f^n (p) \rightarrow p_+$. Then, since $\phi_1, \psi_1$ are bounded non-vanishing continuous
functions on a small compact neighbourhood of $p_+$ in $X (K)$, looking at the first coordinate we have that for $n \geq
0$ large enough there exists $C > 0$ such that
\begin{equation}
  f^n (p) = g^m (p) \Rightarrow \left| n - m \right| \leq C.
  \label{eq:<+label+>}
\end{equation}
Thus by a similar argument as in the $\Torus$-case there exists $j_0 \in \left\{ -C, \dots, C \right\}$ such that the set
\begin{equation}
  \left\{ n \in \Z_{\geq 0} : f^n (p) = g^{n+j_0} (p) \right\} = \left\{ n \in \Z_{\geq 0} : (f,g)^n (p, g^{j_0}
  (p)) \in \Delta \right\}
  \label{eq:<+label+>}
\end{equation}
is of positive Banach density where $\Delta$ is the diagonal in $\A^2_K \times \A^2_K$. By Proposition 1.6 of
\cite{bellDynamicalMordellLangProblem2015}, this set contains an arithmetic progression and we conclude in the same way
as for the algebraic torus.

\section{Proof of Theorems \ref{thm:dyn-mordell-lang-like} and
\ref{thm:dyn-mordell-lang-like-graph}}\label{sec:proof-dynamical-mordell-lang-like}
\subsection{Proof of Theorem \ref{thm:dyn-mordell-lang-like}}\label{subsec:proof-thm-dyn-ML-like}
We can assume that $\OO_{(f,g), +} (x_0, y_0) \cap V$ is infinite.
By Theorem 1.3 of \cite{bellDynamicalMordellLangProblem2010}, there exists $a,b \in \Z_{\geq 0}$ such that for every
$n \geq 0$
\begin{equation}
  (f,g)^{an+b} (x_0, y_0) \in V.
  \label{eq:<+label+>}
\end{equation}
We replace $(x_0,y_0)$ by $(f^b (x_0), g^b (y_0))$. We show that $(f,g)^a (V) \subset V$. Let $Y$ be the closure of
$\OO_{(f,g)^a}(x_0,y_0)$, then $Y \subset V$, $Y$ is $(f,g)^a$-invariant and therefore $\dim Y \leq \dim V = 2$. If
$\dim Y = 0$, then $Y$ is a finite number of points and this is a contradiction since $Y$ is infinite.
So to show the result we need to prove that $\dim Y \neq 1$. Let $Y_i = \overline{\pi_i (Y)}$ where $\pi_1, \pi_2$ are
the two projections. Then $Y_1$ is $f^a$-invariant and $Y_2$ is $g^a$-invariant. By Corollary
\ref{cor:no-invariant-curves-algebraic-torus} and Proposition 4.19 of \cite{abboudDynamicsEndomorphismsAffine2023},
loxodromic automorphisms of normal affine surfaces cannot admit invariant curves, therefore we have two possibilities:
\begin{enumerate}
  \item $\dim Y_1 = 0$ and $\dim Y_2 = 2$ up to switching $Y_1$ and $Y_2$.
  \item $\dim Y_1 = \dim Y_2 = 2$.
\end{enumerate}
In the first case, we must have that $Y_1 = \OO_{f^a} (x_0)$ is finite and $Y = \OO_{f^a} (x_0) \times X_0$ and the result
is immediate.
In the second case, we must have $\dim Y \geq 2$ and therefore $Y = V$.

\subsection{Proof of Theorem \ref{thm:dyn-mordell-lang-like-graph}}\label{subsec:proof-thm-dyn-ML-like-graph}
Notice that if $h = \id$, then $\Gamma_h = \Delta$ is
the diagonal and Theorem \ref{thm:relation-orbits} implies that for some $n$ we have $f^n = g^n$. Now if $h \in \Aut
(X_0)$, replacing $g$ by $h \circ g \circ h^{-1}$ we get that
\begin{equation}
  \OO_f(x_0) \times \OO_{h g h^{-1}} (h(y_0)) \cap \Delta
  \label{eq:<+label+>}
\end{equation}
is infinite, so we have reduced to the case $h = \id$.
\begin{rmq}\label{rmq:positive-characteristic}
  If $K$ is of positive characteristic, then Theorem \ref{thm:dyn-mordell-lang-like-graph} also holds unless $\langle
  f,g \rangle$ is conjugated to a subgroup of $\Aut_F (\A^2_K)$ in $\Bir (\P^2)$. In that last case, the theorem would
  also hold with an additional assumption of positive density as in Theorem \ref{thm:relation-orbits}.
\end{rmq}

\bibliographystyle{alpha}
\bibliography{biblio}
\end{document}